\documentclass[12pt]{amsart}
\usepackage{amsfonts, amssymb, amsmath, amsthm}
\usepackage[mathscr]{eucal}
\usepackage[hmargin=1.2in, vmargin=1.5in]{geometry}
\pagestyle {myheadings}
\input amssym.def
\input amssym

\newtheorem{theorem}{Theorem}[section]
\newtheorem{proposition}[theorem]{Proposition}

\newtheorem{corollary}[theorem]{Corollary}

\theoremstyle{definition}
\newtheorem{definition} [theorem]{Definition}

\theoremstyle{remark} \newtheorem{remark} {Remark}

\numberwithin{equation}{section}


\begin{document}

\title[]{The directional short-time fractional Fourier transform of distributions}

\author[A. Ferizi]{Astrit Ferizi}
\address{Faculty of Mathematics and Natural Sciences, University of Prishtina, George Bush 31, Prishtina, 10000, Kosovo}
\email{ferizi.astrit@gmail.com}

\author[K. Saneva]{Katerina Hadzi-Velkova Saneva}
\address{Faculty of Electrical Engineering and Information Technologies, Ss. Cyril and Methodius University, Rugjer Boshkovikj 18, Skopje, 1000, North Macedonia}
\email {saneva@feit.ukim.edu.mk}

\author[S. Maksimovi\'{c}]{Snje\v{z}ana Maksimovi\'{c}}
\address{Faculty of Architecture, Civil Engineering and Geodesy, University of Banja Luka, Bulevar vojvode Petra Bojovica 1A, Banja Luka, 78000, Bosnia and Herzegovina}
\email{snjezana.maksimovic@aggf.unibl.org}

\keywords{Fractional Fourier transform, short-time fractional Fourier transform, directional short-time fractional Fourier transform, distributions, desingularization formula.}

\begin{abstract}
We introduce  the directional short-time fractional Fourier transform (DSTFRFT)  and prove an extended Parseval's identity and a reconstruction formula for it. We also investigate the continuity of both the directional short-time fractional Fourier transform and its synthesis operator on the appropriate space of test functions. Using the obtained continuity results, we develop a distributional framework for the DSTFRFT on the space of tempered distributions $\mathcal{S}'(\mathbb{R}^n)$. We end the article with a desingularization formula.
\end{abstract}

\maketitle

\section{Introduction}

The Fourier transform (FT) has proven to be a very efficient tool for capturing the frequency information of signals, but it does not provide information about the time at which these frequencies occur  \cite{Gro01}. To overcome this limitation, many transforms have been proposed, such as the short-time Fourier transform (STFT), the wavelet transform (WT), the fractional Fourier transform (FRFT), the  short-time fractional Fourier transform (STFRFT), among others.

The concept of fractional Fourier transforms traces back to at least 1929 (see historical notes and references in \cite{But}). The first comprehensive approach  was introduced  by Kober in 1939 \cite{Kober}.  Since the FRFT depends on the parameter $\alpha$, it can be seen as rotating a signal by an angle $\alpha$ in the time-frequency plane. The FRFT was popularized by Namias in 1980,  as a tool for solving certain classes of ordinary and partial differential equations \cite{Nam80}. A slightly different approach was given by Almeida in \cite{Alm94}, who also provided some properties and an application to the study of swept-frequency filters. 

The FRFT has been extended and studied in different spaces of generalized functions using two different techniques, one analytic and the other algebraic \cite{Zay07}. The FRFT theory of tempered distributions can be found in \cite{Path12} where it is shown that the FRFT is an isomorphism of the space of tempered distribution $\mathcal{S}'(\mathbb{R})$ onto itself. The \textit{n}-dimensional FRFT is defined by using the tensor product of \textit{n} copies of the one-dimensional FRFT and has been investigated by different authors \cite{Kam19, Oza01, Tof23}. A different approach is given by Zayed in \cite{Zay18}, who defines a two-dimensional FRFT that is not a tensor product of the one-dimensional transform.

The STFT has been shown to be an efficient tool for capturing both the time and frequency information of a signal \cite{Gro01}. A natural generalization of the STFT, called the short-time fractional Fourier transform (STFRFT), has been introduced in \cite{Tao10}. Using the tensor product of \textit{n}-copies of the one-dimensional STFRFT, Gao and Li introduced a multi-dimensional STFRFT \cite{Gao21}.
Following the duality approach, the STFRFT has recently been extended to the space of tempered distributions and has been used to study a significant part of the theory of distributions, the asymptotic behavior of distributions \cite{Ata23}.

In \cite{Can99}, Candes introduced a new transform for dealing with higher-dimensional phenomena, called the ridgelet transform, which is a directional-sensitive variant of the one-dimensional WT. The ridgelet transform first acts on multidimensional signals with the Radon transform (RT) and then applies the one-dimensional WT. Similarly, Grafakos and Sunsing introduced a directional sensitive variant of the STFT, which is a composition of the RT with the one-dimensional STFT \cite{Gra08}. Giv used a different approach in \cite{Giv13}, defining the directional short-time Fourier transform (DSTFT) of $f\in L^1(\mathbb{R}^n)$ with respect to $\psi \in L^{\infty}(\mathbb{R})$ as 
\begin{equation} \label{Giv}
DS_{\psi}f(\textbf{u},b,\textbf{a})=(2\pi)^{-n/2} \int_{\mathbb{R}^n}{f(\textbf{x}) \overline{\psi}(\textbf{x}\cdot \textbf{u}-b)e^{-i\textbf{x}\cdot \textbf{a}}}d\textbf{x},
\end{equation}
where $(\textbf{u},b,\textbf{a})\in \mathbb{S}^{n-1}\times \mathbb{R} \times \mathbb{R}^n$ ($\mathbb{S}^{n-1}$ is the unit sphere in $\mathbb{R}^n$). Following the duality approach, the authors of  \cite{San16}  extended and studied the DSTFT on the space of tempered distributions $\mathcal{S}'(\mathbb{R}^n)$. They also proved that the largest distribution space in which the direct approach works is $D_{L^1}'(\mathbb{R}^n)$.

Motivated by the  DSTFT defined by Giv, in this paper, we introduce a transform that extends the capabilities of the STFRFT for more efficiently probing  the directional properties of signals. We called this directional sensitive variant of the STFRFT the directional short-time fractional Fourier transform (DSTFRFT). This new transform may be considered as a generalization of the DSTFT defined by (\ref{Giv}) (see Remark \ref{rem1} below).

The paper is organized as follows. In Section \ref{Se2}, the DSTFRFT and its synthesis operator are introduced, and the extended Parseval's identity and the reconstruction formula are proved. 
In Section \ref{se3}, the continuity of the DSTFRFT (Theorem \ref{ctdstfrft}) and its synthesis operator (Theorem \ref{ctdstfrft1}) on the spaces $\mathcal{S}(\mathbb{R}^n)$ and $\mathcal{S}(\mathbb{Y}^{2n})$, $\mathbb{Y}^{2n}=\mathbb{S}^{n-1} \times \mathbb{R} \times \mathbb{R}^{n}$, respectively, has been proven. The continuity results in Section \ref{se4} have been used to develop a framework for the DSTFRFT on $\mathcal{S}'(\mathbb{R}^n)$.  It is shown  that both the  DSTFRFT and its synthesis operator can be extended as continuous
mappings $DS_{\psi}^{\alpha}:\mathcal{S}' (\mathbb{R}^n) \to \mathcal{S}'(\mathbb{Y}^{2n}) $ and  $(DS_\psi ^{\alpha}) ^{*}:\mathcal{S}'(\mathbb{Y}^{2n}) \to \mathcal{S}' (\mathbb{R}^n) $  for every $\psi \in \mathcal{S}(\mathbb{R})$.
In the last section, Section \ref{se5}, the relationship between the FRFT and the DSTFRFT has been shown, and a desingularization formula has been obtained.


\section{Preliminaries}

\subsection{Notations and spaces}
 We briefly introduce the \textit{n}-dimensional notation. When $\textbf{x}=(x_1,x_2,...,x_n) \in \mathbb{R}^{n}, \textbf{y}=(y_1,y_2,...,y_n) \in \mathbb{R}^n$ and $ \alpha=(\alpha_1,\alpha_2,...,\alpha_n) \in \mathbb{N}_{0}^{n}$, we write $\textbf{x}^\alpha=x_{1}^{\alpha_1}x_{2}^{\alpha_2}\cdots x_{n}^{\alpha_n}$, 
$\partial_{\textbf{x}}^{\alpha}=\partial^{\alpha_1}_{x_1}\partial^{\alpha_2}_{x_2}\cdots \partial^{\alpha_n}_{x_n}=\frac{\partial^{|\alpha|}}{\partial x_{1}^{\alpha_1} \partial x_{2}^{\alpha_2} \cdots \partial x_{n}^{\alpha_n}}$, $|\alpha|=\alpha_1+\alpha_2+...+\alpha_n$, $\vert \textbf{x} \vert$ denotes the Euclidean norm and $\textbf{x}\cdot \textbf{y}= x_1 y_1+x_2 y_2+\cdots +x_n y_n$ is the scalar product of $\textbf{x}$ and $\textbf{y}$. We write $A\lesssim B$ when $A \leq C \cdot B $ for some positive constant $C$.
We also  use the notation $\left( f, \varphi \right)$ for the $L^2$-inner product of $f$ and $\varphi$ and $\langle f, \varphi \rangle$ for the dual pairing between the distribution $f$ and a test function $\varphi$; $\left( f, \varphi \right)=\langle f, \overline{\varphi} \rangle$.
 The FT $\mathcal{F}f$ of a function $f \in L^{1}(\mathbb{R}^n)$ is defined as 
$\mathcal{F}f(\boldsymbol{\xi})=\widehat{f}(\boldsymbol{\xi})=(2\pi)^{-n/2} \int_{\mathbb{R}^n}f(\textbf{x})e^{-i\textbf{x}\cdot \boldsymbol{\xi}}d\textbf{x}, \enspace  \boldsymbol{\xi} \in \mathbb{R}^n,$
and it is extended to $ L^2(\mathbb{R}^n)$ in the usual way \cite{Hor83}.

The well-known Schwartz space  $\mathcal{S}(\mathbb{R}^n)$ consists of all smooth complex-valued functions $\varphi\in C^{\infty}(\mathbb{R}^n)$ such that 
\begin{equation} \label{Schartzspace}
\rho_m(\varphi)=\sup_{\textbf{x}\in \mathbb{R}^n,   |\alpha| \leq m } (1+|\textbf{x}|)^{m} | \partial^{\alpha}\varphi(\textbf{x}) | <\infty,
\end{equation}
for all $m\in \mathbb{N}_{0}$ \cite{Sch66, Hor83}. It is topologized by means of seminorms (\ref{Schartzspace}). Its dual $\mathcal{S}^{'}(\mathbb{R}^n)$ is the well-known space of tempered distributions. All dual spaces in the paper are equipped with the strong dual topology \cite{Tre67}.

{The STFT $S_{\psi}f$ of an integrable function $f\in L^1(\mathbb{R}^n)$ with respect to a window function $\psi\in \mathcal{S}(\mathbb{R}^n)$ is defined as 
\begin{equation} \label{STFT}
    S_{\psi}f(\textbf{b},\textbf{a})=(2\pi)^{-n/2}  \int_{\mathbb{R}^n}{f(\textbf{x})\overline{\psi}(\textbf{x}-\textbf{b}) e^{-i\textbf{x}\cdot \textbf{a}}d\textbf{x}},
\end{equation}
    where $(\textbf{b},\textbf{a})\in \mathbb{R}^n \times \mathbb{R}^n$ \cite{Gro01}. } 

Let $\mathbb{Y}^{2n}=\mathbb{S}^{n-1} \times \mathbb{R} \times \mathbb{R}^{n}$, $n \geq 2$. The unit sphere $\mathbb{S}^{n-1}\subset \mathbb{R}^{n}$ will be equipped with normalized surface area measure. 
We introduce  the space $\mathcal{S}(\mathbb{Y}^{2n})$ of all smooth functions $\Phi \in C^{\infty}(\mathbb{Y}^{2n})$ such that
\begin{equation} \label{S(Y^(2n))}
\rho_{s,r}^{l,m,k}(\Phi)=\sup_{(\textbf{u},b,\textbf{a})\in \mathbb{Y}^{2n} }{(1+|b|^2)^{r/2} (1+|\textbf{a}|^2)^{s/2} |\partial_{\textbf{a}}^{l}  \partial_{b}^{m} \Delta_{\textbf{u}}^{k} \Phi(\textbf{u},b,\textbf{a})|}<\infty,
\end{equation}
for all $s,r,m,k \in \mathbb{N}_0$, $l\in \mathbb{N}_0^{n}$, where $\Delta_{\textbf{u}}$ stands for the Laplace-Beltrami operator on the unit sphere $\mathbb{S}^{n-1}$. The topology of $\mathcal{S}(\mathbb{Y}^{2n})$ is defined by the family of seminorms (\ref{S(Y^(2n))}).
Its dual space is denoted by $\mathcal{S}'(\mathbb{Y}^{2n})$. We fix $dbd\textbf{a}d\textbf{u}$ as a standard measure on $\mathbb{Y}^{2n}$.
If $F$ is a locally integrable function of slow growth on $\mathbb{Y}^{2n}$, i.e. if there exist $C>0$ and $s\in \mathbb{N}_0$ such that 
$$|F(\textbf{u},b,\textbf{a})| \leq C \left( 1+|\textbf{a}| \right)^s \left( 1+|b| \right)^s,  \enspace (\textbf{u},b,\textbf{a})\in \mathbb{Y}^{2n}, $$
then $F$ will be identified with an element of $\mathcal{S}'(\mathbb{Y}^{2n})$ via the action 
\begin{equation} \label{standardidentification}
\langle F,\Phi \rangle:= \int_{\mathbb{S}^{n-1}}\int_{\mathbb{R}^{n}} \int_{\mathbb{R}} F(\textbf{u},b,\textbf{a})\Phi(\textbf{u},b,\textbf{a}) dbd\textbf{a}d\textbf{u},  \enspace \Phi \in \mathcal{S}(\mathbb{Y}^{2n}).
\end{equation}

The nuclearity of the Schwartz spaces (the Schwartz kernel theorem) yields the following equality $\mathcal{S}(\mathbb{Y}^{2n}) = \mathcal{S}(\mathbb{S}^{n-1} \times \mathbb{R}) \hat{\otimes} \mathcal{S}(\mathbb{R}^{n})$, where $X \hat{\otimes} Y$ is the topological tensor product space obtained as the completion of $X \otimes Y$ in the $\pi$-topology or, equivalently in the $\epsilon$-topology \cite{Tre67, Her83}. It also leads to the following isomorphisms $\mathcal{S}'(\mathbb{Y}^{2n}) \cong \mathcal{S}'(\mathbb{S}^{n-1} \times \mathbb{R}, \mathcal{S}'(\mathbb{R}^{n})) \cong \mathcal{S}'(\mathbb{R}^{n}, \mathcal{S}'(\mathbb{S}^{n-1} \times \mathbb{R}))$ which is being realized via the standard identification
\begin{equation} \label{SI}
    \langle F, \varphi \otimes \Psi \rangle=\langle \langle F, \varphi \rangle, \Psi \rangle=\langle \langle F, \Psi \rangle, \varphi \rangle, \quad \varphi \in \mathcal{S}(\mathbb{R}^{n}), \, \Psi \in \mathcal{S}(\mathbb{S}^{n-1} \times \mathbb{R}).
\end{equation}


\subsection{The RT }

The RT $Rf$ of a function $f \in L^{1}(\mathbb{R}^n)$ is defined as 
$$
Rf_{\textbf{u}}(p)=\int_{\textbf{x} \cdot \textbf{u} =p} f(\textbf{x})d\textbf{x}= \int_{\mathbb{R}^n} f(\textbf{x}) \delta(p-\textbf{x}\cdot \textbf{u} )d\textbf{x},
$$
where $\textbf{u}\in \mathbb{S}^{n-1}$, $p\in \mathbb{R}$ and $\delta$ is the Dirac-delta function \cite{Her83}. By the Fubini's theorem, one can show that $Rf \in L^1(\mathbb{S}^{n-1} \times \mathbb{R})$ for $f\in L^1(\mathbb{R}^n)$. The dual RT (or back-projection) $R^*\varrho$ of a function $\varrho \in L^{\infty}(\mathbb{S}^{n-1}\times \mathbb{R})$ is defined as 
$$
R^*\varrho(\textbf{x})=\int_{\mathbb{S}^{n-1}} \varrho(\textbf{u},\textbf{x} \cdot \textbf{u})d\textbf{u},
$$
where $\textbf{x}\in \mathbb{R}^n$ \cite{Her83}.

The FT and the RT are connected by the so-called Fourier slice theorem, which states that for $f \in L^{1}(\mathbb{R}^n)$ is true \begin{equation*} \widehat{Rf_\textbf{u}}(p)=(2\pi)^{\frac{n-1}{2}} \widehat{f}(p\textbf{u}),\end{equation*}
where $\textbf{u}\in \mathbb{S}^{n-1}$ and $p\in \mathbb{R}$ \cite{Her83, Hel99}.


\subsection{The FRFT and the STFRFT}
In this subsection, we give definitions and basic properties of the FRFT and the STFRFT. For more details, we refer to \cite{Oza01, Alm94, Tao10, Zay18, Gao21, Path12, Tof23, Kam19, Zay07}.

The FRFT $\mathcal{F}_{\alpha}f$ of order  $\alpha=(\alpha_1,\alpha_2,...,\alpha_n)\in \mathbb{R}^n$ of a function $f \in L^{1}(\mathbb{R}^n)$ is defined as 
\begin{equation} \label{FrFT}
    \mathcal{F}_{\alpha} f(\boldsymbol{\xi})=\int_{\mathbb{R}^n}f(\textbf{x})K_{\alpha}(\textbf{x},\boldsymbol{\xi})d\textbf{x}, \enspace  \boldsymbol{\xi}\in \mathbb{R}^n,
\end{equation}
where 
\begin{equation}\label{kernel} K_{\alpha}(\textbf{x},\boldsymbol{\xi})=\prod_{k=1}^{n} K_{\alpha_k}(x_k,\xi_k),\end{equation} 
$\textbf{x}=(x_1,...,x_n),\ \boldsymbol{\xi}=(\xi_1,...,\xi_n)$ and 
$K_{\alpha_k}(x_k,\xi_k)$ are defined by
\begin{align*}
K_{\alpha_k}(x_k,\xi_k)&=\begin{cases}
C_{\alpha_k} e^{i \left( \frac{x_k^2+\xi_{k}^2}{2}c_{1}^{(k)}-x_k \xi_k c_{2}^{(k)}  \right)} &if \ \alpha_k \notin \pi \mathbb{Z}  \\
\delta(x_k-\xi_k) &if \ \alpha_k \in 2\pi \mathbb{Z} \\
\delta(x_k+\xi_k) &if \ \alpha_k \in 2 \pi \mathbb{Z} +\pi
\end{cases},
\end{align*} 
$c_{1}^{(k)}=\text{cot}(\alpha_k),\ c_{2}^{(k)}=\text{csc}(\alpha_k),\ C_{\alpha_k}=\sqrt{\frac{1-ic_{1}^{(k)}}{2\pi}}$ \cite{Zay18, Oza01, Alm94}. Throughout the paper, we employ the following notations
$C_{\alpha}=\prod_{k=1}^{n} C_{\alpha_k}, \ c_1=(c_1^{(1)},...,c_1^{(n)})$ and $c_2=(c_2^{(1)},...,c_2^{(n)})$. 

If we put $\alpha=(\alpha_1,\alpha_2,...,\alpha_n)=(\pi/2,\pi/2,...,\pi/2)$ in (\ref{FrFT}), we obtain the definition of the FT. If $f\in L^1(\mathbb{R}^n)$ sucht that $\mathcal{F}_{\alpha}f \in L^1(\mathbb{R}^n)$ then 
\begin{equation} \label{invFRFT}
    f(\textbf{w})=\int_{\mathbb{R}^{n}} \mathcal{F}_{\alpha}  f  \left( \boldsymbol{\xi} \right) K_{-\alpha}(\textbf{w},\boldsymbol{\xi})  d\boldsymbol{\xi}, 
\end{equation}
for a.e. \textbf{w} $\in \mathbb{R}^n$ (\cite{Kam19}, Thm. 3.11).
It is shown in \cite{Path12, Zay07} that the FRFT is a continuous mapping from $\mathcal{S}(\mathbb{R}^n)$ onto itself and following the duality approach it can be extended to the space of tempered distributions by 
$$ \langle \mathcal{F}_{\alpha} f, \varphi \rangle = \langle f, \mathcal{F}_{\alpha} \varphi \rangle, \ f\in \mathcal{S}'(\mathbb{R}^n), \ \varphi \in \mathcal{S}(\mathbb{R}^n).$$

The STFRFT $S_{\psi}^{\alpha}f$ of order $\alpha=(\alpha_1,\alpha_2,...,\alpha_n)\in \mathbb{R}^n$ of an integrable function $f\in L^1(\mathbb{R}^n)$ with respect to a window function $\psi\in \mathcal{S}(\mathbb{R}^n)$ is defined as 
\begin{equation} \label{STFrFT}
    S_{\psi}^{\alpha}f(\textbf{b},\textbf{a})= \int_{\mathbb{R}^n}{f(\textbf{x})\overline{\psi}(\textbf{x}-\textbf{b}) K_{\alpha}(\textbf{x},\textbf{a})d\textbf{x}},
\end{equation}
where $(\textbf{b},\textbf{a})\in \mathbb{R}^n \times \mathbb{R}^n$ and  $K_{\alpha}(\textbf{x},\textbf{a})$ is given by (\ref{kernel}) (\cite{Gao21}, Def. 2).
For $\alpha=(\alpha_1,\alpha_2,...,\alpha_n)=(\pi/2,...,\pi/2)$ in (\ref{STFrFT}), we obtain the definition of the STFT (\ref{STFT}).

Let $\psi\in \mathcal{S}(\mathbb{R}^n)$ be a non-trivial window and let $\eta \in \mathcal{S}(\mathbb{R}^n) $ be a synthesis window for $\psi$, namely, $\left( \eta, \psi \right) \neq 0.$ If $f\in L^1(\mathbb{R}^n)$ sucht that $\mathcal{F}_{\alpha}f \in L^1(\mathbb{R}^n) $, then the following reconstruction formula holds pointwisely,
\begin{equation*}f(\normalfont\textbf{w})=\frac{1}{\left( \eta, \psi \right)} \int_{\mathbb{R}^{n}} \int_{\mathbb{R}^n} S_\psi ^{\alpha} f(\textbf{b},\textbf{a}) \eta(\textbf{w} -\textbf{b}) 	K_{-\alpha}(\textbf{w},\textbf{a}) d\textbf{b}d\textbf{a},\end{equation*}
for a.e. $\textbf{w}\in \mathbb{R}^n$ \cite{Tao10, Ata23}. 


\section{The DSTFRFT of functions}\label{Se2}

In this section we define the DSTFRFT and prove an extended Parseval's identity and a reconstruction formula that suggests the definition of the directional short-time fractional Fourier synthesis operator. 
\begin{definition} \label{def3.1}
Let $\psi\in \mathcal{S}(\mathbb{R})$. The DSTFRFT $DS_{\psi}^{\alpha}f$ of order $\alpha=(\alpha_1,\alpha_2,...,\alpha_n)\in \mathbb{R}^n$ of an integrable function $f\in L^1(\mathbb{R}^n)$ with respect to a window function  $\psi$ is defined as 
\begin{equation} \label{DSTFrFT}
\begin{split}
DS_{\psi}^{\alpha}f(\textbf{u},b,\textbf{a}):&=\left( f(\textbf{x}), \psi(\textbf{u}\cdot \textbf{x}-b)K_{-\alpha}(\textbf{x},\textbf{a}) \right)\\
&=  \int_{\mathbb{R}^n}{f(\textbf{x})\overline{\psi}(\textbf{u}\cdot \textbf{x}-b) K_{\alpha}(\textbf{x},\textbf{a})d\textbf{x}},
\end{split}
\end{equation}
where $(\textbf{u},b,\textbf{a})\in \mathbb{Y}^{2n}$ and  $K_{\alpha}(\textbf{x},\textbf{a})$ is given by (\ref{kernel}). Since $K_{\alpha_k}(x_k,a_k)$ is a $2\pi$-periodic function with respect to $\alpha_k$, from now and on, we suppose that $\alpha_k \in (-\pi,\pi) \setminus \lbrace 0 \rbrace $ for $k\in \lbrace 1,2,...,n \rbrace$.
\end{definition}

\begin{remark} \label{rem1}
If we put $\alpha=(\alpha_1,\alpha_2,...,\alpha_n)=(\pi/2,...,\pi/2)$ in (\ref{DSTFrFT}), we obtain the  DSTFT (\ref{Giv}) defined  by Giv \cite{Giv13}. 
\end{remark}

\begin{remark}
Since it may happen that $\overline{\psi}(\textbf{u}\cdot \textbf{x}-b)K_{\alpha}(\textbf{x},\textbf{a}) \notin D_{L^{\infty}}(\mathbb{R}^n) $ then Definition \ref{def3.1} does not work for $f\in D_{L^1}'(\mathbb{R}^n)$. It also does not work for $f\in \mathcal{S} '(\mathbb{R}^n)$.
\end{remark}

Note that
$$\quad DS_{\psi}^{\alpha} f(\textbf{u},b,\textbf{a})=\mathcal{F}_{\alpha} \left( f(\cdot)\overline{\psi}((\cdot)\cdot \textbf{u}-b) \right)(\textbf{a}), \quad (\textbf{u},b,\textbf{a})\in\mathbb Y^{2n}.$$

The following proposition presents a useful relation between the DSTFRFT and the FT.

\begin{proposition} \label{prop3.2}
For $\psi\in \mathcal{S}(\mathbb{R})$ and $f\in L^1(\mathbb{R}^n)$ is true
\begin{align*} 
DS_{\psi}^{\alpha} f(\textbf{u},b,\textbf{a})
&=(2\pi)^{\frac{n-1}{2}} \int_{\mathbb{R}}{\mathcal{F} \left( f(\cdot)  K_{\alpha}(\cdot,\textbf{a})\right) \left( \xi \textbf{u} \right)  \overline{\mathcal{F}{\psi}\left( \xi \right)}  e^{i \xi b} d \xi}.\\
&=(2\pi)^{-\frac{1}{2}} \int_{\mathbb{R}}{\mathcal{F}_{\alpha} \left( f(\cdot) e^{-i(\cdot)\cdot (\xi \textbf{u})} \right) \left( \textbf{a} \right)  \overline{\mathcal{F}{\psi}\left( \xi \right)}  e^{i \xi b} d \xi}\\
&=(2\pi)^{\frac{n}{2}} \mathcal{F}\left({\mathcal{F} \left( f(\cdot)  K_{\alpha}(\cdot,\textbf{a}) \right)\left((\cdot) \textbf{u} \right) \overline{\mathcal{F}{\psi}\left( \cdot \right)} }\right)(-b)
\end{align*}
where $(\textbf{u}, b,\textbf{a})\in\mathbb{Y}^{2n}.$
\end{proposition}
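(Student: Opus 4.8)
The plan is to start from the defining integral in \eqref{DSTFrFT}, namely
\[
DS_{\psi}^{\alpha} f(\textbf{u},b,\textbf{a})=\int_{\mathbb{R}^n} f(\textbf{x})\,\overline{\psi}(\textbf{u}\cdot\textbf{x}-b)\,K_{\alpha}(\textbf{x},\textbf{a})\,d\textbf{x},
\]
and to rewrite the window factor $\overline{\psi}(\textbf{u}\cdot\textbf{x}-b)$ by Fourier inversion on $\mathbb{R}$. Since $\psi\in\mathcal{S}(\mathbb{R})$, we have $\psi(t)=(2\pi)^{-1/2}\int_{\mathbb{R}}\mathcal{F}\psi(\xi)\,e^{it\xi}\,d\xi$, hence, with $t=\textbf{u}\cdot\textbf{x}-b$ and conjugating,
\[
\overline{\psi}(\textbf{u}\cdot\textbf{x}-b)=(2\pi)^{-1/2}\int_{\mathbb{R}}\overline{\mathcal{F}\psi(\xi)}\,e^{-i(\textbf{u}\cdot\textbf{x})\xi}\,e^{ib\xi}\,d\xi .
\]
Substituting this into the integral and interchanging the order of integration (justified below) groups the $\textbf{x}$-integral into $\int_{\mathbb{R}^n} f(\textbf{x})\,e^{-i\textbf{x}\cdot(\xi\textbf{u})}K_{\alpha}(\textbf{x},\textbf{a})\,d\textbf{x}$, which is exactly $(2\pi)^{n/2}$ times $\mathcal{F}\big(f(\cdot)K_{\alpha}(\cdot,\textbf{a})\big)(\xi\textbf{u})$ after absorbing the FT normalization; tracking the constants yields the first displayed identity. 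For the second identity, one instead keeps the exponential $e^{-i\textbf{x}\cdot(\xi\textbf{u})}$ attached to $f$ and reads the remaining $\textbf{x}$-integral $\int_{\mathbb{R}^n} \big(f(\textbf{x})e^{-i\textbf{x}\cdot(\xi\textbf{u})}\big)K_{\alpha}(\textbf{x},\textbf{a})\,d\textbf{x}$ directly as $\mathcal{F}_{\alpha}\big(f(\cdot)e^{-i(\cdot)\cdot(\xi\textbf{u})}\big)(\textbf{a})$ from definition \eqref{FrFT}; again only the prefactor changes. The third identity is a restatement of the first: the outer integral $\int_{\mathbb{R}} G(\xi)\,e^{i b\xi}\,d\xi$ with $G(\xi)=\mathcal{F}\big(f(\cdot)K_{\alpha}(\cdot,\textbf{a})\big)(\xi\textbf{u})\,\overline{\mathcal{F}\psi(\xi)}$ is $(2\pi)^{1/2}$ times the inverse FT of $G$ evaluated at $b$, equivalently $(2\pi)^{1/2}\,\mathcal{F}G(-b)$; combining the constants $(2\pi)^{(n-1)/2}\cdot(2\pi)^{1/2}=(2\pi)^{n/2}$ gives the stated form.

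The key steps, in order: (i) express $\overline{\psi}$ via Fourier inversion; (ii) substitute and apply Fubini to swap $d\textbf{x}$ and $d\xi$; (iii) recognize the inner $\textbf{x}$-integral as an FT (resp. FRFT) and collect constants for identities one and two; (iv) recognize the outer $\xi$-integral as an inverse FT for identity three.

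The main obstacle is the rigorous justification of the Fubini interchange in step (ii), since $K_{\alpha}(\textbf{x},\textbf{a})$ is only bounded in $\textbf{x}$ (indeed $|K_{\alpha_k}(x_k,a_k)|=|C_{\alpha_k}|$ when $\alpha_k\notin\pi\mathbb{Z}$) and does not decay. However, $f\in L^1(\mathbb{R}^n)$ and $|\overline{\psi}(\textbf{u}\cdot\textbf{x}-b)|\le\|\psi\|_{\infty}$, so the double integrand is dominated by $|f(\textbf{x})|\,|C_{\alpha}|\,(2\pi)^{-1/2}\,|\overline{\mathcal{F}\psi(\xi)}|$, which lies in $L^1(\mathbb{R}^n\times\mathbb{R})$ because $\mathcal{F}\psi\in\mathcal{S}(\mathbb{R})\subset L^1(\mathbb{R})$; this makes Fubini applicable. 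A minor additional point is that in identity two the function $\textbf{x}\mapsto f(\textbf{x})e^{-i\textbf{x}\cdot(\xi\textbf{u})}$ is still in $L^1(\mathbb{R}^n)$, so $\mathcal{F}_{\alpha}$ of it is defined by \eqref{FrFT}; and in identity three, $G\in L^1(\mathbb{R})$ since $G(\xi)=DS$-type inner transform (bounded by $\|f\|_1|C_\alpha|(2\pi)^{-n/2}$) times $\overline{\mathcal{F}\psi(\xi)}\in\mathcal{S}(\mathbb{R})$, so the inverse Fourier representation is legitimate and the pointwise identities hold for every $(\textbf{u},b,\textbf{a})\in\mathbb{Y}^{2n}$.
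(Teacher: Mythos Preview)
Your argument is correct, but it follows a different route from the paper's. The paper first rewrites $\overline{\psi}(\textbf{x}\cdot\textbf{u}-b)=\int_{\mathbb{R}}\overline{\psi}(p-b)\,\delta(p-\textbf{x}\cdot\textbf{u})\,dp$, applies Fubini to identify the inner $\textbf{x}$-integral as the Radon transform $R\big(f(\cdot)K_{\alpha}(\cdot,\textbf{a})\big)_{\textbf{u}}(p)$, and then uses Parseval's identity for the one-dimensional FT together with the Fourier slice theorem $\widehat{Rg_{\textbf{u}}}(\xi)=(2\pi)^{(n-1)/2}\widehat{g}(\xi\textbf{u})$ to arrive at the first identity. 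You instead expand $\overline{\psi}$ by Fourier inversion, swap the $d\textbf{x}$ and $d\xi$ integrals via Fubini, and read off the $\textbf{x}$-integral directly as an $n$-dimensional FT (or as an FRFT for the second identity). Your approach is more elementary in that it bypasses the Radon transform and the Fourier slice theorem entirely, and your Fubini justification via the $L^1$ bound $|f(\textbf{x})|\,|C_\alpha|\,|\mathcal{F}\psi(\xi)|$ is clean. The paper's route, on the other hand, makes explicit the Radon-transform structure underlying the directional transform, which is conceptually illuminating and connects the DSTFRFT to the ridgelet/directional-analysis framework discussed in the introduction.
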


\begin{proof}
By the Fubini's theorem, the Parseval's identity for the FT and the Fourier slice theorem, we obtain
\begin{align*}
 \quad DS_\psi ^{\alpha} f(\textbf{u},b,\textbf{a})&= \int_{\mathbb{R}^n}{f(\textbf{x})\overline{\psi}(\textbf{x} \cdot \textbf{u}-b) K_{\alpha}(\textbf{x},\textbf{a})}d\textbf{x}\\
&= \int_{\mathbb{R}^n}{f(\textbf{x})  K_{\alpha}(\textbf{x},\textbf{a})d\textbf{x} \int_{\mathbb{R}} \overline{\psi}(p-b) \delta (p-\textbf{x} \cdot \textbf{u})dp }\\
&= \int_{\mathbb{R}}{\overline{\psi}(p-b) dp \int_{\mathbb{R}^n} f(\textbf{x})  K_{\alpha}(\textbf{x},\textbf{a}) \delta (p-\textbf{x} \cdot \textbf{u})d\textbf{x} }\\
&= \int_{\mathbb{R}}{R\left( f(\cdot) K_{\alpha} (\cdot,\textbf a)\right)_{\textbf{u}}(p)\cdot \overline{\psi}(p-b) dp }\\
&=(2\pi)^{\frac{n-1}{2}} \int_{\mathbb{R}}{\mathcal{F} \left( f(\cdot)  K_{\alpha}(\cdot,\textbf{a})\right) \left( \xi \textbf{u} \right)  \overline{\mathcal{F}{\psi}\left( \xi \right)}  e^{i \xi b} d \xi}.
\end{align*}
\end{proof}

The next proposition gives an extended Parseval's identity.

\begin{proposition}\textnormal{(Extended Parseval's relation)} \label{prop3.4}
Let $\psi\in \mathcal{S}(\mathbb{R})$ be a non-trivial window and let $\eta \in \mathcal{S}(\mathbb{R}) $ be a synthesis window for it. Then 
$$\int_{\mathbb{R}^n} f(\normalfont\textbf{x}) \overline{g(\textbf{x})}d\textbf{x}=\frac{1}{\left( \eta, \psi \right)} \int_{\mathbb{S}^{n-1}}\int_{\mathbb{R}^{n}} \int_{\mathbb{R}} DS_\psi ^{\alpha} f(\textbf{u},b,\textbf{a}) \overline{DS_{\eta} ^{\alpha} g(\textbf{u},b,\textbf{a})} dbd\textbf{a}d\textbf{u},$$
for $f,g \in L^1(\mathbb{R}^n)\cap L^2(\mathbb{R}^n)$.
\end{proposition}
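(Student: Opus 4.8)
The plan is to reduce the claimed $n$-dimensional directional identity to the classical Parseval relation for the STFRFT (equivalently, for the FRFT combined with a translation-type argument) by peeling off the Radon-transform structure, exactly in the spirit of Proposition~\ref{prop3.2}. Concretely, I would start from the middle expression in Proposition~\ref{prop3.2},
\begin{equation*}
DS_{\psi}^{\alpha} f(\textbf{u},b,\textbf{a})=(2\pi)^{-\frac12}\int_{\mathbb{R}}\mathcal{F}_{\alpha}\!\left(f(\cdot)e^{-i(\cdot)\cdot(\xi\textbf{u})}\right)(\textbf{a})\,\overline{\mathcal{F}\psi(\xi)}\,e^{i\xi b}\,d\xi,
\end{equation*}
and the analogous formula for $DS_{\eta}^{\alpha}g$. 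Then I would substitute both into the right-hand side triple integral, and first carry out the integration in $b$ over $\mathbb{R}$: the two exponentials $e^{i\xi b}$ and $\overline{e^{i\xi' b}}=e^{-i\xi' b}$ produce (after applying Parseval/Plancherel in the $b$-variable, or the Fourier inversion giving a $\delta(\xi-\xi')$) a collapse of the double $\xi,\xi'$ integral to a single one. This is the step where the hypothesis $f,g\in L^1\cap L^2$ and $\psi,\eta\in\mathcal{S}(\mathbb{R})$ is used to justify the interchange of integrals (Fubini) and the use of Plancherel, since all the relevant functions are then in $L^2$ in the appropriate variables.

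After the $b$-integration one is left, up to the constant $(2\pi)^{-1}$ and the factor $1/(\eta,\psi)$, with
\begin{equation*}
\frac{1}{(\eta,\psi)}\int_{\mathbb{S}^{n-1}}\int_{\mathbb{R}^n}\int_{\mathbb{R}}\mathcal{F}_{\alpha}\!\left(f(\cdot)e^{-i(\cdot)\cdot(\xi\textbf{u})}\right)(\textbf{a})\,\overline{\mathcal{F}_{\alpha}\!\left(g(\cdot)e^{-i(\cdot)\cdot(\xi\textbf{u})}\right)(\textbf{a})}\,\overline{\mathcal{F}\psi(\xi)}\,\mathcal{F}\eta(\xi)\,d\xi\,d\textbf{a}\,d\textbf{u}.
\end{equation*}
Next I would do the $\textbf{a}$-integration: the FRFT satisfies a Parseval relation (it is, up to unimodular chirp factors, a unitary operator on $L^2(\mathbb{R}^n)$), so $\int_{\mathbb{R}^n}\mathcal{F}_{\alpha}h_1(\textbf{a})\overline{\mathcal{F}_{\alpha}h_2(\textbf{a})}\,d\textbf{a}=\int_{\mathbb{R}^n}h_1(\textbf{x})\overline{h_2(\textbf{x})}\,d\textbf{x}$; applied with $h_1=f(\cdot)e^{-i(\cdot)\cdot(\xi\textbf{u})}$ and $h_2=g(\cdot)e^{-i(\cdot)\cdot(\xi\textbf{u})}$ the two conjugate exponentials cancel, leaving $\int_{\mathbb{R}^n}f(\textbf{x})\overline{g(\textbf{x})}\,d\textbf{x}$, which is independent of $\xi$ and $\textbf{u}$. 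What then remains is
\begin{equation*}
\left(\int_{\mathbb{R}^n}f(\textbf{x})\overline{g(\textbf{x})}\,d\textbf{x}\right)\cdot\frac{1}{(\eta,\psi)}\int_{\mathbb{S}^{n-1}}\int_{\mathbb{R}}\overline{\mathcal{F}\psi(\xi)}\,\mathcal{F}\eta(\xi)\,d\xi\,d\textbf{u},
\end{equation*}
and one finishes by noting that, by Parseval for the one-dimensional FT, $\int_{\mathbb{R}}\mathcal{F}\eta(\xi)\overline{\mathcal{F}\psi(\xi)}\,d\xi=(\eta,\psi)$, while the integral over $\mathbb{S}^{n-1}$ of the constant $1$ is $1$ because the surface measure is normalized; hence the constant factor is exactly $1/(\eta,\psi)\cdot(\eta,\psi)=1$, giving the identity.

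The main obstacle I expect is purely a matter of rigor in the order-of-integration arguments: one must check that the triple integral on the right is absolutely convergent (or handle it as an iterated $L^2$-pairing) before interchanging, and the cleanest route is probably to regard the $b$-integral as an $L^2(\mathbb{R})$ inner product for fixed $(\textbf{u},\textbf{a})$, the $\textbf{a}$-integral as an $L^2(\mathbb{R}^n)$ inner product for fixed $(\textbf{u},\xi)$, and only the final $\xi$ and $\textbf{u}$ integrations as ordinary integrals of integrable functions — using that $\mathcal{F}\psi,\mathcal{F}\eta\in\mathcal{S}(\mathbb{R})$ so decay is never an issue. An alternative, perhaps slicker, proof would avoid Proposition~\ref{prop3.2} entirely: write $DS_\psi^\alpha f(\textbf{u},b,\textbf{a})=S_{\psi_{\textbf u}}^{?}(\cdots)$ is awkward because the window acts through $\textbf{x}\cdot\textbf{u}$, so I would instead directly expand $DS_\psi^\alpha f\cdot\overline{DS_\eta^\alpha g}$ using the definition~(\ref{DSTFrFT}), integrate in $\textbf{a}$ first using the FRFT Parseval relation to get $\delta$-type localization $\textbf{x}=\textbf{x}'$ (after stripping the chirp $K_\alpha(\textbf{x},\textbf{a})\overline{K_\alpha(\textbf{x}',\textbf{a})}$, whose $\textbf{a}$-integral is $\prod_k|C_{\alpha_k}|^2(2\pi/|c_2^{(k)}|)\,\delta(x_k-x_k')=\delta(\textbf{x}-\textbf{x}')$ by the explicit constants $|C_{\alpha_k}|^2=|c_2^{(k)}|/(2\pi)$), then integrate in $b$ to replace $\overline{\psi}(\textbf{x}\cdot\textbf{u}-b)\psi(\textbf{x}\cdot\textbf{u}-b)$'s $b$-average — actually one integrates $\psi(\textbf{x}\cdot\textbf{u}-b)\overline{\eta(\textbf{x}\cdot\textbf{u}-b)}$ in $b$ to get $(\eta,\psi)$ — and finally integrate the surviving $|f(\textbf{x})|$-type term against $\overline{g}$ over $\textbf{x}$, with the $\textbf{u}$-integral of a constant again contributing $1$. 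Either way the proof is a cascade of Parseval identities, and the only real content is bookkeeping the constants $C_{\alpha_k}$, $c_2^{(k)}$ and the normalization of the sphere measure so that everything collapses to the factor $1$.
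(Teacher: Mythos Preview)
Your proposal is correct and follows essentially the same route as the paper: use Proposition~\ref{prop3.2} to rewrite $DS_\psi^\alpha f$ and $DS_\eta^\alpha g$, apply Parseval in the $b$-variable to collapse to a single $\xi$-integral, then apply the FRFT Parseval relation in $\textbf{a}$ (so that the phases $e^{-i(\cdot)\cdot(\xi\textbf{u})}$ cancel), and finish with the one-dimensional Parseval identity $\int\mathcal{F}\eta\,\overline{\mathcal{F}\psi}=(\eta,\psi)$ together with the normalized sphere measure. The only cosmetic difference is that the paper invokes the third form of Proposition~\ref{prop3.2} (writing $DS_\psi^\alpha f$ as a one-dimensional Fourier transform in $b$), which makes the $b$-step literally an $L^2$ inner-product identity rather than a ``$\delta(\xi-\xi')$ collapse'', but this is the same computation and your constant bookkeeping comes out the same way.
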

\begin{proof} First we can see that for $(\textbf{u},{\textbf{a}})\in \mathbb{S}^{n-1} \times \mathbb{R}^n$,
\begin{align*}
&\mathcal{F} \left( f(\cdot)  K_{\alpha}(\cdot,\textbf{a})\right) \left( \xi \textbf{u} \right) \overline{\mathcal{F}{\psi}\left( \xi \right)}\in L^1(\mathbb{R})\cap L^2(\mathbb{R}), \text{and} \\ &\mathcal{F} \left( g(\cdot)  K_{\alpha}(\cdot,\textbf{a})\right) \left( \xi \textbf{u} \right) \overline{\mathcal{F}{\eta}\left( \xi \right)} \in L^1(\mathbb{R})\cap L^2(\mathbb{R}).
\end{align*}
Using Proposition \ref{prop3.2}, the Parseval's identity for the FRFT (\cite{Kam19}, Thm. 3.3), and the Fubini's theorem, we obtain
\begin{align*} 
&\, \quad \int_{\mathbb{S}^{n-1}}\int_{\mathbb{R}^{n}} \int_{\mathbb{R}} DS_\psi ^{\alpha} f(\textbf{u},b,\textbf{a}) \overline{DS_{\eta} ^{\alpha} g(\textbf{u},b,\textbf{a})} dbd\textbf{a}d\textbf{u} \\
&=(2\pi)^n \int_{\mathbb{S}^{n-1}}\int_{\mathbb{R}^{n}}d\textbf{a}d\textbf{u}  \int_{\mathbb{R}} \mathcal{F} \left( \mathcal{F} \left( f(\cdot) K_{\alpha}(\cdot,\textbf a) \right)\left( (\cdot) \textbf{u} \right) \overline{\mathcal{F}{\psi}\left( \cdot \right)} \right)(-b) \\
&\quad \times \overline{\mathcal{F} \left( \mathcal{F} \left( g(\cdot) K_{\alpha}(\cdot,\textbf a) \right)\left( (\cdot) \textbf{u} \right) \overline{\mathcal{F}{\eta}\left( \cdot \right)} \right)(-b)}  db\\
&=(2\pi)^n  \int_{\mathbb{S}^{n-1}}\int_{\mathbb{R}^{n}}d\textbf{a}d\textbf{u}  \int_{\mathbb{R}}  \mathcal{F} \left( f(\cdot)  K_{\alpha}(\cdot,\textbf{a})\right) \left( \xi \textbf{u} \right)  \overline{\mathcal{F} \left( g(\cdot)  K_{\alpha}(\cdot,\textbf{a})\right) \left( \xi \textbf{u} \right) } \mathcal{F}{\eta}\left( \xi \right) \overline{\mathcal{F}{\psi}\left( \xi \right)} d \xi\\
&=\int_{\mathbb{S}^{n-1}} d\textbf{u} \int_{\mathbb{R}}\overline{\mathcal{F}{\psi}\left( \xi \right)} \mathcal{F}{\eta}\left( \xi \right)  d \xi  \int_{\mathbb{R}^n}  \mathcal{F}_{\alpha} \left( f(\cdot) e^{-i(\cdot)\cdot (\xi \textbf{u})} \right) \left( \textbf{a} \right) \overline{ \mathcal{F}_{\alpha} \left( g(\cdot) e^{-i(\cdot)\cdot (\xi \textbf{u})} \right) \left( \textbf{a} \right) } d\textbf{a}  \\
&= \int_{\mathbb{S}^{n-1}} d\textbf{u} \int_{\mathbb{R}}\overline{\psi \left( \xi \right)} \eta\left( \xi \right)  d \xi  \int_{\mathbb{R}^n}   f(\textbf{w}) \overline{  g(\textbf{w}) }  d\textbf{w} \\
&=\left( \eta, \psi \right)  \left( f,g \right).
\end{align*}
\end{proof}
The next proposition gives the reconstruction formula for the DSTFRFT.

\begin{proposition}\textnormal{(Reconstruction formula)} \label{prop3.5}
Let $\psi\in \mathcal{S}(\mathbb{R})$ be a non-trivial window and let $\eta \in \mathcal{S}(\mathbb{R}) $ be a synthesis window for it. If $f\in L^1(\mathbb{R}^n)$ such that $\mathcal{F}_{\alpha}f \in L^1(\mathbb{R}^n) $, then the following reconstruction formula holds pointwisely,
\begin{equation*}f(\normalfont\textbf{w})=\frac{1}{\left( \eta, \psi \right)} \int_{\mathbb{S}^{n-1}}\int_{\mathbb{R}^{n}} \int_{\mathbb{R}} DS_\psi ^{\alpha} f(\textbf{u},b,\textbf{a}) \eta(\textbf{w} \cdot \textbf{u}-b) 	K_{-\alpha}(\textbf{w},\textbf{a}) dbd\textbf{a}d\textbf{u},   \enspace a.e. \quad  \textbf{w}\in \mathbb{R}^n.\end{equation*}
\end{proposition}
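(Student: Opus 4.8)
The plan is to reduce the claimed reconstruction formula to the extended Parseval's relation of Proposition~\ref{prop3.4}. The key observation is that the triple integral on the right-hand side, as a function of $\textbf{w}$, is exactly the quantity we obtain by pairing $DS_\psi^\alpha f$ against $\overline{DS_\eta^\alpha(\delta_{\textbf{w}})}$ in a formal sense; to make this rigorous I would instead fix a test function and integrate against it. Concretely, first I would show that the iterated integral
$$
I(\textbf{w}):=\int_{\mathbb{S}^{n-1}}\int_{\mathbb{R}^{n}} \int_{\mathbb{R}} DS_\psi ^{\alpha} f(\textbf{u},b,\textbf{a})\, \eta(\textbf{w} \cdot \textbf{u}-b) \, K_{-\alpha}(\textbf{w},\textbf{a})\, db\,d\textbf{a}\,d\textbf{u}
$$
is absolutely convergent for a.e.\ $\textbf{w}$, using the integrability hypothesis $\mathcal{F}_\alpha f\in L^1(\mathbb{R}^n)$ together with the rapid decay of $\psi,\eta\in\mathcal{S}(\mathbb{R})$ and the representation of $DS_\psi^\alpha f$ from Proposition~\ref{prop3.2}; this also justifies all the Fubini interchanges below.

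Next I would compute $\int_{\mathbb{R}^n} I(\textbf{w})\,\overline{g(\textbf{w})}\,d\textbf{w}$ for an arbitrary $g\in\mathcal{S}(\mathbb{R}^n)$ (or $g\in L^1\cap L^2$). Interchanging the order of integration, the innermost integral in $\textbf{w}$ becomes $\int_{\mathbb{R}^n}\eta(\textbf{w}\cdot\textbf{u}-b)K_{-\alpha}(\textbf{w},\textbf{a})\overline{g(\textbf{w})}\,d\textbf{w}$, which is precisely $\overline{DS_\eta^\alpha g(\textbf{u},b,\textbf{a})}$ by the definition~(\ref{DSTFrFT}) (using $\overline{K_{-\alpha}(\textbf{w},\textbf{a})}=K_{\alpha}(\textbf{w},\textbf{a})$, which follows from the explicit form of the kernel). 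Hence
$$
\int_{\mathbb{R}^n} I(\textbf{w})\,\overline{g(\textbf{w})}\,d\textbf{w}=\int_{\mathbb{S}^{n-1}}\int_{\mathbb{R}^{n}} \int_{\mathbb{R}} DS_\psi ^{\alpha} f(\textbf{u},b,\textbf{a})\, \overline{DS_{\eta} ^{\alpha} g(\textbf{u},b,\textbf{a})}\, db\,d\textbf{a}\,d\textbf{u}=\left(\eta,\psi\right)\left(f,g\right)
$$
by Proposition~\ref{prop3.4}. Therefore $\left(\frac{1}{(\eta,\psi)}I,g\right)=(f,g)$ for all $g$ in a dense subspace of $L^2(\mathbb{R}^n)$, which forces $I(\textbf{w})=(\eta,\psi)f(\textbf{w})$ for a.e.\ $\textbf{w}$, giving the formula.

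The main obstacle is the first step: establishing the absolute convergence of $I(\textbf{w})$ and the legitimacy of the Fubini interchanges, since $DS_\psi^\alpha f(\textbf{u},b,\textbf{a})$ need not be integrable on $\mathbb{Y}^{2n}$ in general (the kernel $K_\alpha$ has modulus one, so there is no decay in $\textbf{a}$ from it). Here I would exploit the hypothesis $\mathcal{F}_\alpha f\in L^1$: writing $DS_\psi^\alpha f(\textbf{u},b,\textbf{a})=\mathcal{F}_\alpha(f\,\overline{\psi}((\cdot)\cdot\textbf{u}-b))(\textbf{a})$ and using that convolution on the FRFT side is controlled, one gets enough decay in $\textbf{a}$ after integrating against $K_{-\alpha}(\textbf{w},\textbf{a})$; the decay in $b$ comes for free from $\eta\in\mathcal{S}(\mathbb{R})$ since $|\eta(\textbf{w}\cdot\textbf{u}-b)|$ decays rapidly as $|b|\to\infty$ uniformly in $\textbf{u}$ on the compact set $\mathbb{S}^{n-1}$ and for $\textbf{w}$ in a bounded set. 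An alternative, cleaner route that sidesteps these estimates is to invoke the already-known pointwise reconstruction formula for the (non-directional) STFRFT stated at the end of Section~2 after disintegrating via the Radon/Fourier-slice identity of Proposition~\ref{prop3.2}; but the Parseval-based argument above is the most self-contained.
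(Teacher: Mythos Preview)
Your route is genuinely different from the paper's. The paper does a direct pointwise computation: it plugs in the Fourier--slice representation of Proposition~\ref{prop3.2}, performs the $b$-integral first (producing $\mathcal{F}\eta(\xi)e^{i\xi(\textbf{w}\cdot\textbf{u})}$), then the $\textbf{a}$-integral as an inverse FRFT of $\mathcal{F}_\alpha\bigl(f(\cdot)e^{-i(\cdot)\cdot(\xi\textbf{u})}\bigr)$ (this is exactly where $\mathcal{F}_\alpha f\in L^1$ is used, since modulation by $e^{-i(\cdot)\cdot(\xi\textbf{u})}$ becomes a translation-plus-phase on the FRFT side), and finally observes that the remaining $(\xi,\textbf{u})$-integral collapses to $(\eta,\psi)$. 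Your approach instead recycles Proposition~\ref{prop3.4}, which is economical but shifts all the analytic work into the Fubini justification you flag.

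That Fubini step can indeed be closed, but your description of it is imprecise. The phrase ``decay in $\textbf{a}$ after integrating against $K_{-\alpha}(\textbf{w},\textbf{a})$'' is not what you want---there is no $\textbf{a}$ left after that integral. What you actually need is that $\|DS_\psi^\alpha f(\textbf{u},b,\cdot)\|_{L^1(\mathbb{R}^n_{\textbf{a}})}=\|\mathcal{F}_\alpha\bigl(f\,\overline{\psi}((\cdot)\cdot\textbf{u}-b)\bigr)\|_{L^1}$ is bounded \emph{uniformly} in $(\textbf{u},b)$. This holds because, after stripping the chirps, multiplication by the ridge function $\overline{\psi}((\cdot)\cdot\textbf{u}-b)$ becomes convolution of $\widehat{f(\cdot)e^{i c_1\cdot(\cdot)^2/2}}\in L^1$ against a finite measure concentrated on the line $\mathbb{R}\textbf{u}$ with density controlled by $\|\widehat{\psi}\|_{L^1}$; Young's inequality then gives the uniform $L^1$ bound. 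With that in hand, the quadruple integral over $(\textbf{w},\textbf{u},b,\textbf{a})$ is absolutely convergent and your interchange is legitimate. One more small point you should state explicitly: Proposition~\ref{prop3.4} requires $f\in L^1\cap L^2$, and $f\in L^1$ with $\mathcal{F}_\alpha f\in L^1$ forces $f\in L^\infty$ a.e.\ by the inversion formula~(\ref{invFRFT}), hence $f\in L^2$.
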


\begin{proof} 
Using Proposition \ref{prop3.2}, the formula (\ref{invFRFT}) and the Fubini's theorem, we obtain
\begin{align*}
&\, \quad \int_{\mathbb{S}^{n-1}}\int_{\mathbb{R}^{n}} \int_{\mathbb{R}} DS_\psi ^{\alpha} f(\textbf{u},b,\textbf{a}) \eta(\textbf{w} \cdot \textbf{u}-b) 	K_{-\alpha}(\textbf{w},\textbf{a}) dbd\textbf{a}d\textbf{u}\\
&=(2\pi)^{-\frac{1}{2}} \int_{\mathbb{S}^{n-1}} \int_{\mathbb{R}^{n}} K_{-\alpha}(\textbf{w},\textbf{a}) d\textbf{a}d\textbf{u} \int_{\mathbb{R}}  \eta(\textbf{w} \cdot \textbf{u}-b) db \int_{\mathbb{R}}{\mathcal{F}_{\alpha} \left( f(\cdot) e^{-i(\cdot)\cdot (\xi \textbf{u})} \right) \left( \textbf{a} \right)  \overline{\mathcal{F}{\psi}\left( \xi \right)}  e^{i \xi b} d \xi} \\
&=(2\pi)^{-\frac{1}{2}} \int_{\mathbb{S}^{n-1}} \int_{\mathbb{R}} \overline{\mathcal{F}{\psi}\left( \xi \right)} d \xi d\textbf{u} \int_{\mathbb{R}^{n}} \mathcal{F}_{\alpha} \left( f(\cdot) e^{-i(\cdot)\cdot (\xi \textbf{u})} \right) \left( \textbf{a} \right) K_{-\alpha}(\textbf{w},\textbf{a})  d\textbf{a} \int_{\mathbb{R}}{\eta(\textbf{w} \cdot \textbf{u}-b)   e^{i \xi b} db} 
\end{align*}
\begin{align*}
&=\int_{\mathbb{S}^{n-1}} \int_{\mathbb{R}} \overline{\mathcal{F}{\psi}\left( \xi \right)} \mathcal{F}{\eta}\left( \xi \right) e^{i\xi (\textbf{w} \cdot \textbf{u})}d \xi d\textbf{u} \int_{\mathbb{R}^{n}} \mathcal{F}_{\alpha} \left( f(\cdot) e^{-i(\cdot)\cdot (\xi \textbf{u})} \right) \left( \textbf{a} \right) K_{-\alpha}(\textbf{w},\textbf{a})  d\textbf{a} \\
&= f(\textbf{w}) \int_{\mathbb{S}^{n-1}} \int_{\mathbb{R}} \overline{\mathcal{F}{\psi}\left( \xi \right)} \mathcal{F}{\eta}\left( \xi \right)  d \xi d\textbf{u} =\left( \eta, \psi \right)  f(\textbf{w}) 
\end{align*}
for a.e. $ \textbf{w} \in \mathbb{R}^n$.
\end{proof}

According to our choice of the standard measure on $\mathbb{Y}^{2n}$, we denote by $L^2(\mathbb{Y}^{2n}):=L^2(\mathbb{Y}^{2n},dbd\textbf{a}d\textbf{u} )$. So, the inner product on this space is
$$\left(F,G \right)_{L^2(\mathbb{Y}^{2n})}:=\int_{\mathbb{S}^{n-1}}\int_{\mathbb{R}^{n}} \int_{\mathbb{R}} F(\textbf{u},b,\textbf{a}) \overline{G(\textbf{u},b,\textbf{a})} dbd\textbf{a}d\textbf{u}.$$
Now, under the conditions of Proposition \ref{prop3.4}, we have
$$\left( f,g \right)= \frac{1}{\left( \eta, \psi \right)}\left( DS_\psi ^{\alpha} f,DS_{\eta} ^{\alpha} g \right)_{L^2(\mathbb{Y}^{2n})}.$$

If $\psi\in \mathcal{S}(\mathbb{R})$ is {a non-trivial window} and $f\in L^1(\mathbb{R}^n)\cap L^2(\mathbb{R}^n) $ then 
$$\| DS_\psi ^{\alpha} f\|_{L^2(\mathbb{Y}^{2n})}=\|\psi\|_{L^2(\mathbb{R}^{n})} \| f\|_{L^2(\mathbb{R}^{n})}.$$
Since $\mathcal{S}(\mathbb{R}^n)$ is a dense subset of $L^2(\mathbb{R}^n)$ and the last relation is true for all $f \in \mathcal{S}(\mathbb{R}^n)$, $DS_{\psi} ^{\alpha}$ can be extended to a constant multiple of an isometric embedding $L^2(\mathbb{R}^n)\to L^2(\mathbb{Y}^{2n})$.

The reconstruction formula suggests us to define the directional short-time fractional Fourier synthesis operator that maps functions on $\mathbb{Y}^{2n}$ to functions on $\mathbb{R}^n$. 
For the given $\psi\in \mathcal{S}(\mathbb{R})$, we define the directional short-time fractional Fourier synthesis operator as
\begin{equation}\label{synthop}
(DS_{\psi} ^{\alpha})^{*} \Phi(\textbf{x}):=\int_{\mathbb{S}^{n-1}}\int_{\mathbb{R}^{n}} \int_{\mathbb{R}} \Phi(\textbf{u},b,\textbf{a}) \psi(\textbf{x} \cdot \textbf{u}-b) K_{-\alpha}(\textbf{x},\textbf{a}) dbd\textbf{a}d\textbf{u}, \enspace \textbf{x}\in \mathbb{R}^n.
\end{equation}
The last integral is absolutely convergent if $\Phi\in \mathcal{S}(\mathbb{Y}^{2n})$.
Now, by using the reconstruction formula, we obtain
$$(DS_{\eta} ^{\alpha})^{*}(DS_\psi ^{\alpha} f)(\textbf{x}):=\int_{\mathbb{S}^{n-1}}\int_{\mathbb{R}^{n}} \int_{\mathbb{R}} DS_\psi ^{\alpha} f(\textbf{u},b,\textbf{a}) \eta(\textbf{x}\cdot \textbf{u}-b) K_{-\alpha}(\textbf{x},\textbf{a}) dbd\textbf{a}d\textbf{u}=\left( \eta, \psi \right)f(\textbf{x}),$$
for a.e. $\textbf{x}\in \mathbb{R}^n$.

We also show that the directional short-time fractional Fourier synthesis operator is in fact the transpose of the DSTFRFT in the following sense:

\begin{proposition}\label{prop3.6}
Let $\psi\in \mathcal{S}(\mathbb{R})$. If $f\in L^1(\mathbb{R}^n)$ and $\Phi\in \mathcal{S}(\mathbb{Y}^{2n})$, then
\begin{equation*}\int_{\mathbb{R}^n}f(\normalfont\textbf{x})\overline{(DS_{\psi} ^{\alpha})^{*}(\overline{\Phi})}(\textbf{x})d\textbf{x}=\int_{\mathbb{S}^{n-1}}\int_{\mathbb{R}^{n}} \int_{\mathbb{R}} DS_\psi ^{\alpha} f(\textbf{u},b,\textbf{a}) \Phi(\textbf{u},b,\textbf{a}) dbd\textbf{a}d\textbf{u}.\end{equation*}
\end{proposition}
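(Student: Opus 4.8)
The statement is an adjunction identity, and the natural route is to start from the right-hand side, substitute the integral definition of $DS_\psi^\alpha f$ from \eqref{DSTFrFT}, and interchange the order of integration. The plan is to write
\[
\int_{\mathbb{S}^{n-1}}\int_{\mathbb{R}^{n}}\int_{\mathbb{R}} DS_\psi^\alpha f(\textbf{u},b,\textbf{a})\,\Phi(\textbf{u},b,\textbf{a})\,dbd\textbf{a}d\textbf{u}
=\int_{\mathbb{S}^{n-1}}\int_{\mathbb{R}^{n}}\int_{\mathbb{R}}\Phi(\textbf{u},b,\textbf{a})\Big(\int_{\mathbb{R}^n} f(\textbf{x})\,\overline{\psi}(\textbf{u}\cdot\textbf{x}-b)\,K_\alpha(\textbf{x},\textbf{a})\,d\textbf{x}\Big)\,dbd\textbf{a}d\textbf{u},
\]
then pull the $\textbf{x}$-integral to the outside and recognize the inner triple integral as $\overline{(DS_\psi^\alpha)^*(\overline{\Phi})}(\textbf{x})$. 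Here the key algebraic point is the conjugation bookkeeping: the synthesis operator \eqref{synthop} integrates $\Phi$ against $\psi(\textbf{x}\cdot\textbf{u}-b)K_{-\alpha}(\textbf{x},\textbf{a})$, so applying it to $\overline{\Phi}$ and then conjugating gives $\int\int\int \Phi(\textbf{u},b,\textbf{a})\,\overline{\psi}(\textbf{x}\cdot\textbf{u}-b)\,\overline{K_{-\alpha}(\textbf{x},\textbf{a})}\,dbd\textbf{a}d\textbf{u}$; one then uses the identity $\overline{K_{-\alpha}(\textbf{x},\textbf{a})}=K_\alpha(\textbf{x},\textbf{a})$, which is immediate from the explicit formula for $K_{\alpha_k}$ since $C_{-\alpha_k}=\overline{C_{\alpha_k}}$, $\cot(-\alpha_k)=-\cot(\alpha_k)$ and $\csc(-\alpha_k)=-\csc(\alpha_k)$ (and in the degenerate cases $\delta$ is real and even). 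With that identity in hand the two sides match termwise.

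\textbf{Justifying Fubini.} The main technical obstacle is justifying the interchange of the $\textbf{x}$-integration with the $(\textbf{u},b,\textbf{a})$-integration. We have $f\in L^1(\mathbb{R}^n)$ and $\Phi\in\mathcal{S}(\mathbb{Y}^{2n})$, while $|K_\alpha(\textbf{x},\textbf{a})|$ is bounded by the constant $|C_\alpha|=\prod_k|C_{\alpha_k}|$ (using $\alpha_k\in(-\pi,\pi)\setminus\{0\}$, as fixed in Definition \ref{def3.1}) and $\psi$ is bounded. Thus the integrand is dominated by $|C_\alpha|\,\|\psi\|_{L^\infty}\,|f(\textbf{x})|\,|\Phi(\textbf{u},b,\textbf{a})|$; since $\Phi\in\mathcal{S}(\mathbb{Y}^{2n})$ has rapid decay in $b$ and $\textbf{a}$ it is integrable on $\mathbb{Y}^{2n}$ (this is essentially the remark following \eqref{synthop} that the synthesis integral converges absolutely), so the full quadruple integral of the absolute value is finite and Tonelli/Fubini applies. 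This is the step I expect to require the most care to state cleanly, but it is routine given the rapid decay of Schwartz functions on $\mathbb{Y}^{2n}$ and the boundedness of the kernel.

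\textbf{Assembly.} After the interchange the inner integral over $(\textbf{u},b,\textbf{a})$ is exactly
\[
\int_{\mathbb{S}^{n-1}}\int_{\mathbb{R}^{n}}\int_{\mathbb{R}}\Phi(\textbf{u},b,\textbf{a})\,\overline{\psi}(\textbf{x}\cdot\textbf{u}-b)\,K_\alpha(\textbf{x},\textbf{a})\,dbd\textbf{a}d\textbf{u}
=\overline{\int_{\mathbb{S}^{n-1}}\int_{\mathbb{R}^{n}}\int_{\mathbb{R}}\overline{\Phi}(\textbf{u},b,\textbf{a})\,\psi(\textbf{x}\cdot\textbf{u}-b)\,K_{-\alpha}(\textbf{x},\textbf{a})\,dbd\textbf{a}d\textbf{u}}
=\overline{(DS_\psi^\alpha)^*(\overline{\Phi})}(\textbf{x}),
\]
where the middle equality again uses $\overline{K_{-\alpha}(\textbf{x},\textbf{a})}=K_\alpha(\textbf{x},\textbf{a})$ together with pulling the conjugation bar outside the integral. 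Multiplying by $f(\textbf{x})$ and integrating over $\mathbb{R}^n$ yields the claimed identity, completing the proof.
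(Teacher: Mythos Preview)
Your proof is correct and follows essentially the same route as the paper's own argument: both expand the definitions and apply Fubini's theorem to swap the $\textbf{x}$-integration with the $(\textbf{u},b,\textbf{a})$-integration. The only cosmetic difference is that the paper starts from the left-hand side and moves right, whereas you start from the right-hand side; you also spell out the conjugation identity $\overline{K_{-\alpha}}=K_\alpha$ and the Fubini justification more carefully than the paper does, but the substance is identical.
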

\begin{proof}
Using the Fubini's theorem, we obtain
\begin{align*}
\int_{\mathbb{R}^n}f(\textbf{x})\overline{(DS_{\psi} ^{\alpha})^{*}(\overline{\Phi})}(\textbf{x})d\textbf{x} &=\int_{\mathbb{R}^n}f(\textbf{x})d\textbf{x} \int_{\mathbb{S}^{n-1}}\int_{\mathbb{R}^{n}} \int_{\mathbb{R}} \Phi(\textbf{u},b,\textbf{a}) \overline{\psi}(\textbf{x}\cdot \textbf{u}-b)  K_{\alpha}(\textbf{x},\textbf{a}) dbd\textbf{a}d\textbf{u}\\
&=\int_{\mathbb{S}^{n-1}}\int_{\mathbb{R}^{n}} \int_{\mathbb{R}} \Phi(\textbf{u},b,\textbf{a}) dbd\textbf{a}d\textbf{u} \int_{\mathbb{R}^n}f(\textbf{x})\overline{\psi}(\textbf{x}\cdot \textbf{u}-b) K_{\alpha}(\textbf{x},\textbf{a}) d\textbf{x}\\
&=\int_{\mathbb{S}^{n-1}}\int_{\mathbb{R}^{n}} \int_{\mathbb{R}} DS_\psi ^{\alpha} f(\textbf{u},b,\textbf{a}) \Phi(\textbf{u},b,\textbf{a}) dbd\textbf{a}d\textbf{u}.
\end{align*}
\end{proof}
Under the standard identification (\ref{standardidentification}), the last relation takes the form
$$\big\langle f, \overline{(DS_{\psi} ^{\alpha})^{*}(\overline{\Phi})}\, \big\rangle = \langle  DS_{\psi} ^{\alpha }f, \Phi \rangle,$$
that will be our model for defining the distributional DSTFRFT in Section \ref{se4}.


\section{Continuity of the DSTFRFT on test function spaces}\label{se3}

In the next two theorems we show the continuity of $DS_\psi^{\alpha}:\mathcal{S}(\mathbb{R}^n)  \to \mathcal{S}(\mathbb{Y}^{2n})$ and $(DS_{\psi} ^{\alpha})^{*}: \mathcal{S}(\mathbb{Y}^{2n}) \to \mathcal{S}(\mathbb{R}^n)$ provided that $\psi \in \mathcal{S}(\mathbb{R})$.

Notice that we can extend the definition of the DSTFRFT as a sesquilinear mapping $DS^{\alpha}:(f,\psi)\to DS_{\psi} ^{\alpha}f $, whereas the directional short-time fractional Fourier synthesis operator extends to the bilinear form 
$(DS^{\alpha})^{*}:(\Phi,\psi)\to (DS_{\psi}^{\alpha})^{*}\Phi $.

\begin{theorem}\label{ctdstfrft}
The bilinear mapping 
$DS^{\alpha}:\mathcal{S}(\mathbb{R}^n) \times \mathcal{S}(\mathbb{R}) \to \mathcal{S}(\mathbb{Y}^{2n})$ defined as
$\left( f,\psi \right) \to DS_\psi ^{\alpha} f$
is continuous.
\end{theorem}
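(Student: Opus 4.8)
The plan is to reduce the problem to a sequence of elementary estimates of the seminorms $\rho_{s,r}^{l,m,k}$ on $\mathcal{S}(\mathbb{Y}^{2n})$, showing each is bounded by a product of a Schwartz seminorm of $f$ and a Schwartz seminorm of $\psi$. Since the seminorms of $\mathcal{S}(\mathbb{R})$ are symmetric under the Fourier transform, and $\mathcal{F}\psi\in\mathcal{S}(\mathbb{R})$ whenever $\psi\in\mathcal{S}(\mathbb{R})$, I would work primarily with the representation
\begin{equation*}
DS_\psi^{\alpha}f(\mathbf{u},b,\mathbf{a})=(2\pi)^{\frac{n-1}{2}}\int_{\mathbb{R}}\mathcal{F}\bigl(f(\cdot)K_{\alpha}(\cdot,\mathbf{a})\bigr)(\xi\mathbf{u})\,\overline{\mathcal{F}\psi(\xi)}\,e^{i\xi b}\,d\xi
\end{equation*}
from Proposition \ref{prop3.2}, which exhibits $DS_\psi^{\alpha}f$ as an inverse Fourier transform in the $b$ variable. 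Smoothness in $(\mathbf{u},b,\mathbf{a})$ is routine: the kernel $K_{\alpha}(\mathbf{x},\mathbf{a})$ is smooth in $\mathbf{a}$ with derivatives that are polynomials in $\mathbf{x},\mathbf{a}$ times $K_{\alpha}$ (recall $\alpha_k\in(-\pi,\pi)\setminus\{0\}$, so only the oscillatory branch occurs), and differentiating under the integral sign in \eqref{DSTFrFT} is justified by the rapid decay of $f$ and $\psi$.

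The core of the argument is the weight estimate. First I would handle decay in $b$: since $(1+b^2)^{r/2}\lesssim\sup_{j\le r}|\partial_\xi^{\,j}(\text{integrand in }\xi)|$ type bounds follow by integrating by parts $r$ times in $\xi$ against $e^{i\xi b}$, each integration by parts moving a $\xi$-derivative onto $\mathcal{F}\bigl(f(\cdot)K_{\alpha}(\cdot,\mathbf{a})\bigr)(\xi\mathbf{u})\,\overline{\mathcal{F}\psi(\xi)}$; the $\xi$-derivatives either hit $\overline{\mathcal{F}\psi}$ (controlled by Schwartz seminorms of $\mathcal{F}\psi$, hence of $\psi$) or produce a factor $\mathbf{u}\cdot(\nabla\mathcal{F}(\cdots))$, and since $|\mathbf{u}|=1$ these are uniformly bounded. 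Decay in $\mathbf{a}$ is the place where the oscillatory kernel must be exploited: a factor $(1+|\mathbf{a}|^2)^{s/2}$ is generated by writing $a_k^2 K_{\alpha}(\mathbf{x},\mathbf{a})$ in terms of $\partial_{x_k}$ and multiplication operators acting on $K_{\alpha}$ — concretely, from the phase $\tfrac{x_k^2+a_k^2}{2}\cot\alpha_k-x_k a_k\csc\alpha_k$ one gets that $a_k K_\alpha$ equals a first-order differential operator in $x_k$ (with coefficients linear in $x_k,a_k$) applied to $K_\alpha$, and iterating lets one trade powers of $|\mathbf{a}|$ for $x$-derivatives falling on $f$ plus polynomial factors in $\mathbf{x}$, all absorbed into $\rho_m(f)$ after integrating in $\mathbf{x}$ against the convergence factor $(1+|\mathbf{x}|)^{-n-1}$. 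The Laplace–Beltrami derivatives $\Delta_{\mathbf{u}}^{k}$ act only through the dependence on $\mathbf{u}$ in $\mathcal{F}(fK_\alpha)(\xi\mathbf{u})$; by the chain rule these produce finitely many terms consisting of polynomials in $\xi$ times higher $\mathbb{R}^n$-gradients of $\mathcal{F}(fK_\alpha)$ evaluated at $\xi\mathbf{u}$, the polynomial-in-$\xi$ factors being killed by the rapid decay of $\overline{\mathcal{F}\psi}$ and the gradient terms being Fourier transforms of $\mathbf{x}^\beta f(\mathbf{x})K_\alpha(\mathbf{x},\mathbf{a})$, again estimable by Schwartz seminorms of $f$.

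Combining, for each multi-index/integer tuple $(s,r,l,m,k)$ I expect an estimate of the shape
\begin{equation*}
\rho_{s,r}^{l,m,k}\bigl(DS_\psi^{\alpha}f\bigr)\;\lesssim\;\rho_{M}(f)\,\rho_{N}(\psi)
\end{equation*}
for suitable $M=M(s,r,l,m,k)$ and $N=N(s,r,l,m,k)$ depending also on $\alpha$ through the bounded quantities $|\cot\alpha_k|,|\csc\alpha_k|,|C_{\alpha_k}|$; this is exactly joint continuity of the bilinear map on the product of Fréchet spaces (separate continuity suffices for bilinear maps on Fréchet spaces, but we get the quantitative bound directly). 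The main obstacle is bookkeeping rather than conceptual: keeping track of how the three kinds of differentiation ($\partial_\mathbf{a}$, $\partial_b$, $\Delta_\mathbf{u}$) and the two weights interact with the $\mathbf{x}$-integral and with the fact that $\mathbf{a}$-derivatives of $K_\alpha$ reintroduce powers of $\mathbf{x}$ that must be re-absorbed — so the induction producing $M$ and $N$ has to be organized carefully, but no single estimate is deep. One should also note at the outset that $\overline{\psi(\mathbf{u}\cdot\mathbf{x}-b)}$ as a function of $\mathbf{x}\in\mathbb{R}^n$ is merely bounded (constant along the hyperplanes $\mathbf{u}\cdot\mathbf{x}=\text{const}$), which is precisely why one passes to the Radon/Fourier-slice representation before estimating, rather than working with \eqref{DSTFrFT} directly.
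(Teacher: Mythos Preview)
Your proposal is correct and follows essentially the same approach as the paper: both reduce the seminorm estimate to integration by parts in $\xi$ (via the Fourier-slice representation of Proposition~\ref{prop3.2}) to gain decay in $b$, integration by parts in $\mathbf{x}$ against the oscillatory phase of $K_\alpha$ to gain decay in $\mathbf{a}$, and the chain/Leibniz rule to convert $\partial_b^m$, $\partial_{\mathbf{a}}^l$, and $\Delta_{\mathbf{u}}^k$ into operations that replace $(f,\psi)$ by $(\mathbf{x}^d f,\psi^{(j)})$ or $(\mathbf{x}^d f,t^j\psi)$. The only organizational difference is that the paper presents this as four explicit reduction steps (first strip off $\partial_{\mathbf{a}}^l\partial_b^m$, then $\Delta_{\mathbf{u}}^k$, then the $b$-weight, then the $\mathbf{a}$-weight), and it handles $\Delta_{\mathbf{u}}^k$ directly on the integral \eqref{DSTFrFT} rather than on the Fourier-slice side; your version interleaves the estimates but uses the same ingredients.
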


\begin{proof}
To prove the theorem, it is enough to show that for given $s,r,m,k\in \mathbb{N}_0$ and $l\in \mathbb{N}_0 ^{n}$ there exist $v,\tau \in \mathbb{N}_0$ and $C>0$ such that
$$\rho_{s,r}^{l,m,k}(DS_\psi ^{\alpha} f) \leq C\rho_v(f) \rho_{\tau}(\psi),  \enspace f\in \mathcal{S}(\mathbb{R}^n),\ \psi\in \mathcal{S}(\mathbb{R}).$$
We may suppose that $r$ is even and $s\geq 1.$\\
\textbf{Step 1.} By the definition of the DSTFRFT and the Leibniz’s formula, we obtain
\begin{align*}
&\,\quad \left|\frac{\partial^l}{\partial \textbf{a}^l}\frac{\partial^m}{\partial b^m} DS_{\psi} ^{\alpha} f(\textbf{u},b,\textbf{a})\right|\\
&= \left| \frac{\partial^l}{\partial \textbf{a}^l}\frac{\partial^m}{\partial b^m} \int_{\mathbb{R}^n} {f(\textbf{x}) \overline{\psi}(\textbf{x} \cdot \textbf{u}-b) K_{\alpha}(\textbf{x},\textbf{a})} d\textbf{x} \right| \\
    &= \left| \frac{\partial^l}{\partial \textbf{a}^l} \int_{\mathbb{R}^n} {f(\textbf{x}) \overline{\psi^{(m)}}(\textbf{x} \cdot \textbf{u}-b) K_{\alpha}(\textbf{x},\textbf{a})} d\textbf{x} \right| \\
    &= \left| \sum_{|r|,|d|\leq |l|} {c_{\alpha,r,d}\ \textbf{a}^r \int_{\mathbb{R}^n} {\textbf{x}^d f(\textbf{x}) \overline{\psi^{(m)}}(\textbf{x} \cdot \textbf{u}-b) K_{\alpha}(\textbf{x},\textbf{a})} d\textbf{x}}   \right| 
    \end{align*}
    \begin{align*}
    &\leq \left( 1+|\textbf{a}|^2 \right)^{|l|/2} \sum_{|r|,|d|\leq |l|} {\left| c_{\alpha,r,d} \right| \ \left| \int_{\mathbb{R}^n} {\textbf{x}^d f(\textbf{x}) \overline{\psi^{(m)}}(\textbf{x} \cdot \textbf{u}-b) K_{\alpha}(\textbf{x},\textbf{a})} d\textbf{x} \right|}\\
    &= \left( 1+|\textbf{a}|^2 \right)^{|l|/2} \sum_{|r|,|d|\leq |l|} {\left| c_{\alpha,r,d} \right| \ \left| DS_{\psi_m}^{\alpha} f_d (\textbf{u},b,\textbf{a}) \right|}
\end{align*}
where $\psi_{m}(t)=\psi^{(m)}(t) \in \mathcal{S} (\mathbb{R}), \, f_{d}(\textbf{x})= \textbf{x}^{d}f(\textbf{x}) \in \mathcal{S} (\mathbb{R}^n) $ and $c_{\alpha,r,d}$ is a constant which depends on $\alpha, r,d$. So, we can suppose that $m=0$ and $l=(0,0,...,0)$.\\
\textbf{Step 2.} First, we have
\begin{align*}
    \left| \Delta_{\textbf{u}}^{k} DS_\psi ^{\alpha} f(\textbf{u},b,\textbf{a}) \right| &= \left| 
\Delta_{\textbf{u}}^{k} \left( \int_{\mathbb{R}^n} f(\textbf{x}) \overline{\psi}(\textbf{x} \cdot \textbf{u}-b) K_{\alpha}(\textbf{x},\textbf{a}) d\textbf{x}  \right) \right| \\
&= \left| \sum_{j,|d|\leq 2k} P_{j,d}(\textbf{u}) \int_{\mathbb{R}^n} \textbf{x}^d f(\textbf{x}) \overline{\psi^{(j)}}(\textbf{x} \cdot \textbf{u}-b) K_{\alpha}(\textbf{x},\textbf{a}) d\textbf{x}  \right|\\
& \lesssim \sum_{j,|d|\leq 2k} \left| \int_{\mathbb{R}^n} \textbf{x}^d f(\textbf{x}) \overline{\psi^{(j)}}(\textbf{x} \cdot \textbf{u}-b) K_{\alpha}(\textbf{x},\textbf{a}) d\textbf{x} \right|\\
& \lesssim \sum_{j,|d|\leq 2k} \left| DS_{\psi_{j}} ^{\alpha} f_d (\textbf{u},b,\textbf{a}) \right| ,  
\end{align*}
where the $P_{j,d}(\textbf{u})$ are certain polynomials, $\psi_j(t)=\psi^{(j)}(t)$ and $ f_{d}(\textbf{x})= \textbf{x}^{d}f(\textbf{x}) $. Since  $\psi_{j} \in \mathcal{S} (\mathbb{R})$ and $ f_{d} \in \mathcal{S} (\mathbb{R}^n) $, we can suppose that $k=0$.\\
\textbf{Step 3.} Using Proposition \ref{prop3.2}, we have 
\begin{align*}
    &\,\quad\left( 1+|b|^2 \right)^{r/2} \left| DS_\psi ^{\alpha} f(\textbf{u},b,\textbf{a})\right|\\
    &=\left( 1+|b|^2 \right)^{r/2} \left| (2\pi)^{\frac{n-1}{2}} \int_{\mathbb{R}}{\mathcal{F} \left( f(\cdot)  K_{\alpha}(\cdot,\textbf{a})\right) \left( \xi \textbf{u} \right) \overline{\mathcal{F}{\psi}\left( \xi \right)}  e^{i \xi b} d \xi} \right| \\ 
    &= \bigg|  (2\pi)^{\frac{n-1}{2}} \int_{\mathbb{R}}{\mathcal{F} \left( f(\cdot)  K_{\alpha}(\cdot,\textbf{a})\right) \left( \xi \textbf{u} \right) \overline{\mathcal{F}{\psi}\left( \xi \right)} \left( 1- \frac{\partial ^2}{\partial \xi^2}\right)^{r/2}(e^{i \xi b}) d \xi \bigg|} \\ 
    &=\bigg|(2\pi)^{\frac{n-1}{2}} \int_{\mathbb{R}} \left( 1- \frac{\partial ^2}{\partial \xi^2}\right)^{r/2} \left[ \mathcal{F} \left( f(\cdot)  K_{\alpha}(\cdot,\textbf{a})\right) \left( \xi \textbf{u} \right) \overline{\mathcal{F}{\psi}\left( \xi \right)} \right] e^{i \xi b} d \xi \bigg| \\ 
    &= \bigg| (2\pi)^{\frac{n-1}{2}} \sum_{|d|,j \leq r} Q_{d,j}(\textbf{u}) \int_{\mathbb{R}} \mathcal{F} \left( \textbf{x}^d f(\textbf{x})  K_{\alpha}(\textbf{x},\textbf{a})\right) \left( \xi \textbf{u} \right)  \overline{\mathcal{F}{(t^j \psi(t))}\left( \xi \right)}  e^{i \xi b} d \xi \bigg|, 
\end{align*}
for some polynomials $Q_{d,j}(u)$.
If we put $f_{d}(\textbf{x})=\textbf{x}^{d}f(\textbf{x}), \psi_j(t)=t^j \psi(t)$, using Proposition \ref{prop3.2}, we obtain
\begin{align*}
     &\, \quad\left( 1+|b|^2 \right)^{r/2} \left| DS_\psi ^{\alpha} f(\textbf{u},b,\textbf{a})\right|\\
   &\lesssim \sum_{|d|,j \leq r} \bigg|  (2\pi)^{\frac{n-1}{2}}  \int_{\mathbb{R}} \mathcal{F} \left( f_d(\textbf x)  K_{\alpha}(\textbf{x},\textbf{a}) \right) \left( \xi \textbf{u} \right)  \overline{\mathcal{F}{\psi_{j}}\left( \xi \right)}  e^{i \xi b} d \xi \bigg|\\
   &=\sum_{|d|,j \leq r} \left| DS_{\psi_j} ^{\alpha} f_d(\textbf{u},b,\textbf{a})\right|.   
\end{align*}
Since $\psi_j \in \mathcal{S} (\mathbb{R})$ and $ f_{d} \in \mathcal{S} (\mathbb{R}^n)$, we can suppose that $r=0$.\\
\textbf{Step 4.}
Since $f\in \mathcal{S} (\mathbb{R}^n) $, for $k_1,...,k_n \in \mathbb{N}_0$ and $\textbf{a}=(a_1,...,a_n)\in \mathbb{R}^n$, we obtain 
\begin{align*}
    &\, \quad \prod_{r=1}^{n} {|a_r|^{k_r} \left| DS_{\psi} ^{\alpha}f(\textbf{u},b,\textbf{a}) \right|}= \prod_{r=1}^{n} {|a_r|^{k_r} \left| \int_{\mathbb{R}^n}{f(\textbf{x})\overline{\psi}(\textbf{u}\cdot \textbf{x}-b) K_{\alpha}(\textbf{x},\textbf{a})d\textbf{x}} \right|}\\
    &\lesssim \prod_{r=1}^{n} {|a_r|^{k_r} \left| \int_{\mathbb{R}^n}{f(\textbf{x}) e^{i \frac{c_1 \cdot (x_1 ^2 ,...,x_n ^2)}{2}} \overline{\psi}(\textbf{u}\cdot \textbf{x}-b)  e^{-i  \textbf{x} \cdot (a_1 c_2 ^{(1)},...,a_n c_2 ^{(n)})}d\textbf{x}} \right|}\\
    &\lesssim \left| \int_{\mathbb{R}^n}{f(\textbf{x}) e^{i \frac{c_1 \cdot (x_1 ^2 ,...,x_n ^2)}{2}} \overline{\psi}(\textbf{u}\cdot \textbf{x}-b) \frac{\partial^{k_1+...+k_n}}{\partial x_1 ^{k_1} \cdots \partial x_n ^{k_n} } \left( e^{-i  \textbf{x} \cdot (a_1 c_2 ^{(1)},...,a_n c_2 ^{(n)})} \right) d\textbf{x}} \right|\\
    &= \left| \int_{\mathbb{R}^n}{ \frac{\partial^{k_1+...+k_n}}{\partial x_1 ^{k_1} \cdots \partial x_n ^{k_n} } \left( f(\textbf{x}) e^{i \frac{c_1 \cdot (x_1 ^2 ,...,x_n ^2)}{2}} \overline{\psi}(\textbf{u}\cdot \textbf{x}-b) \right)   e^{-i  \textbf{x} \cdot (a_1 c_2 ^{(1)},...,a_n c_2 ^{(n)})}  d\textbf{x}} \right|\\
 & \lesssim \rho_{q_1}(f) \rho_{q_2}(\psi)  
\end{align*}
for some $q_1,q_2 \in \mathbb{N}_0$. 
\end{proof}

We now analyze the directional short-time fractional Fourier synthesis operator.

\begin{theorem} \label{ctdstfrft1}
The bilinear mapping 
$(DS^{\alpha})^{*}: \mathcal{S}(\mathbb{Y}^{2n}) \times \mathcal{S}(\mathbb{R}) \to \mathcal{S}(\mathbb{R}^n)$ defined as 
$(\Phi,\psi) \to (DS_{\psi} ^{\alpha})^{*}\Phi$
is continuous.
\end{theorem}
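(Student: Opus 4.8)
The plan is to estimate the Schwartz seminorms $\rho_m\big((DS_\psi^\alpha)^*\Phi\big)$ directly from the defining integral \eqref{synthop}, reducing the problem in successive steps just as in the proof of Theorem \ref{ctdstfrft}. Concretely, I would show that for every $m \in \mathbb{N}_0$ there exist $v, \tau \in \mathbb{N}_0$ and $C>0$ with
\begin{equation*}
\rho_m\big((DS_\psi^\alpha)^*\Phi\big) \leq C\, \rho_{?,?}^{?,?,?}(\Phi)\, \rho_\tau(\psi),
\end{equation*}
where the right-hand side involves finitely many of the seminorms \eqref{S(Y^(2n))} of $\Phi$; this will give joint continuity of the bilinear map. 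The quantity to control is $\sup_{\textbf{x}}(1+|\textbf{x}|)^m|\partial_{\textbf{x}}^\beta (DS_\psi^\alpha)^*\Phi(\textbf{x})|$ for $|\beta|\le m$.

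First I would differentiate under the integral sign in \eqref{synthop}. The integrand is $\Phi(\textbf{u},b,\textbf{a})\psi(\textbf{x}\cdot\textbf{u}-b)K_{-\alpha}(\textbf{x},\textbf{a})$, and $K_{-\alpha}(\textbf{x},\textbf{a})=C_{-\alpha}\exp\big(i(\tfrac{|\textbf{x}|^2+|\textbf{a}|^2}{2}c_1 - \textbf{x}\cdot(a_k c_2^{(k)})_k)\big)$ (in the non-degenerate case; the degenerate cases $\alpha_k\in\pi\mathbb Z$ only make things easier). Each $\partial_{x_j}$ brings down either a factor from $\psi'(\textbf{x}\cdot\textbf{u}-b)u_j$, or from the chirp a factor $i(x_j c_1^{(j)} - a_j c_2^{(j)})$. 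After applying $\partial_{\textbf{x}}^\beta$ and the Leibniz rule we obtain a finite sum of terms, each of the form $(\text{polynomial in }\textbf{x})\cdot(\text{polynomial in }\textbf{a})\cdot\psi^{(p)}(\textbf{x}\cdot\textbf{u}-b)K_{-\alpha}(\textbf{x},\textbf{a})$ integrated against $\Phi$; the $\textbf{a}$-polynomial factors can be absorbed into $\Phi$ using a seminorm with enough $\textbf{a}$-decay, and the $\psi^{(p)}$ into $\rho_\tau(\psi)$. So it suffices to bound $(1+|\textbf{x}|)^m |(DS_\psi^\alpha)^*\Phi(\textbf{x})|$ after replacing $\psi$ by a derivative and $\Phi$ by $\textbf{a}^\gamma\Phi$, i.e. we may take $\beta=0$.

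Next, to produce the polynomial weight $(1+|\textbf{x}|)^m$ in $\textbf{x}$, the idea (as in Step 4 of the previous proof) is to trade powers of $x_j$ against $\textbf{a}$-derivatives of the integrand via the chirp: since $\partial_{a_j}K_{-\alpha}(\textbf{x},\textbf{a}) = i(a_j c_1^{(j)} - x_j c_2^{(j)})K_{-\alpha}(\textbf{x},\textbf{a})$, one can write $x_j K_{-\alpha} = \tfrac{i}{c_2^{(j)}}\partial_{a_j}K_{-\alpha} + \tfrac{a_j c_1^{(j)}}{c_2^{(j)}}K_{-\alpha}$ (here $c_2^{(j)}=\csc\alpha_j\ne 0$ since $\alpha_j\notin\pi\mathbb Z$, which is exactly why the standing assumption $\alpha_k\in(-\pi,\pi)\setminus\{0\}$ matters). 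Iterating and integrating by parts in $\textbf{a}$ moves these derivatives onto $\Phi$, at the cost of finitely many $\partial_{\textbf{a}}^l\Phi$ terms and extra polynomial factors in $\textbf{a}$; the boundary terms vanish by the rapid decay of $\Phi$. After this, the remaining integral is of $|\Phi(\textbf{u},b,\textbf{a})|$ (with $\textbf{a}$- and $b$-polynomial weights, and $\textbf{a},b,\textbf{u}$-derivatives) times $|\psi^{(p)}(\textbf{x}\cdot\textbf{u}-b)|\le \|\psi^{(p)}\|_\infty$ over $\mathbb{S}^{n-1}\times\mathbb{R}^n\times\mathbb{R}$; choosing the $\textbf{a}$-weight of order $n+1$ and the $b$-weight of order $2$ makes $\iint (1+|\textbf{a}|^2)^{-(n+1)/2}(1+|b|^2)^{-1}\,d\textbf{a}\,db$ finite, and the bound becomes $\lesssim \rho_{s,r}^{l,0,0}(\Phi)\,\rho_\tau(\psi)$ uniformly in $\textbf{x}$.

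The main obstacle is bookkeeping: tracking how the iterated chirp manipulation and integration by parts in $\textbf{a}$ convert the weight $(1+|\textbf{x}|)^m$ into a finite linear combination of seminorms of $\Phi$ with controlled orders, and making sure the $\textbf{u}$-dependence (the polynomials in $\textbf{u}$ coming from derivatives of $\psi(\textbf{x}\cdot\textbf{u}-b)$, which are bounded on $\mathbb{S}^{n-1}$, plus the eventual need for $\Delta_{\textbf{u}}^k\Phi$ if one wants the cleanest statement) stays harmless. There is no essential difficulty beyond that — the key analytic points are that $\csc\alpha_j\ne 0$, that $\Phi\in\mathcal{S}(\mathbb{Y}^{2n})$ decays fast enough to justify differentiation under the integral, integration by parts, and absolute convergence, and that $\mathbb{S}^{n-1}$ is compact so all $\textbf{u}$-factors are bounded.
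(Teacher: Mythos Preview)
Your approach is correct but takes a genuinely different route from the paper's proof. The paper does not do direct integration by parts against the chirp; instead it first takes the one-dimensional Fourier transform in the $b$ variable (citing \cite{APS}, Theorem 3.2) to rewrite
\[
(DS_\psi^\alpha)^*\Phi(\textbf{x}) = \int_{\mathbb{R}} \hat{\psi}(w)\, dw \int_{\mathbb{S}^{n-1}} e^{iw(\textbf{x}\cdot\textbf{u})}\, \mathcal{F}_{-\alpha}\!\big(\hat{\Phi}(\textbf{u},w,\cdot)\big)(\textbf{x})\, d\textbf{u},
\]
and then simply invokes the fact that $\mathcal{F}_{-\alpha}:\mathcal{S}(\mathbb{R}^n)\to\mathcal{S}(\mathbb{R}^n)$ is a topological isomorphism to conclude the seminorm estimate. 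In effect the paper packages your chirp/integration-by-parts manipulation into that single black box. Your hands-on argument is more self-contained and makes the seminorm dependence explicit (and never needs $\Delta_{\textbf{u}}^k$, as you suspect), at the price of the bookkeeping you flag; the paper's route is much shorter but imports the continuity of the FRFT on $\mathcal{S}$, which is itself proved by essentially the same $x_j\leftrightarrow\partial_{a_j}$ trick. One small point to tighten in your write-up: the polynomial-in-$\textbf{x}$ factors coming from differentiating the chirp in your first step are not ``absorbed'' anywhere---they raise the order of the weight $(1+|\textbf{x}|)^m$ you must then kill in the second step, so say so explicitly.
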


\begin{proof}
Since $\Phi \in \mathcal{S}(\mathbb{Y}^{2n}) $  and $\psi \in \mathcal{S}(\mathbb{R}) $, it follows that  $(DS_{\psi} ^{\alpha})^{*}\Phi \in \mathbb{C}^{\infty}(\mathbb{R}^n).$
Using \eqref{synthop} and the proof of Theorem 3.2 in \cite{APS}, we  have 
\begin{align*}
(DS_{\psi} ^{\alpha})^{*}\Phi(\textbf{x})&=\int_{\mathbb{S}^{n-1}}\int_{\mathbb{R}^{n}} \int_{\mathbb{R}} \Phi(\textbf{u},b,\textbf{a}) \psi(\textbf{x} \cdot \textbf{u}-b) K_{-\alpha}(\textbf{x},\textbf{a}) dbd\textbf{a}d\textbf{u}\\
& = \int_{\mathbb{S}^{n-1}} d\textbf{u} \int_{\mathbb{R}^{n}} K_{-\alpha}(\textbf{x},\textbf{a}) d\textbf{a} \int_{\mathbb{R}} \hat{\Phi}(\textbf{u},w,\textbf{a}) \hat{\psi}(w) e^{iw(\textbf{x}\cdot \textbf{u})} dw, 
\end{align*}
where $\hat{\Phi}$ stands for the FT of $\Phi(u,b,a)$ with respect to the variable $b$. Now, using the Fubini's theorem, we obtain
\begin{align*}
(DS_{\psi} ^{\alpha})^{*}\Phi(\textbf{x})
& =\int_{\mathbb{R}} \hat{\psi}(w)  dw \int_{\mathbb{S}^{n-1}} e^{iw(\textbf{x}\cdot \textbf{u})} d\textbf{u} \int_{\mathbb{R}^{n}} \hat{\Phi}(\textbf{u},w,\textbf{a}) K_{-\alpha}(\textbf{x},\textbf{a}) d\textbf{a}  \\
& =\int_{\mathbb{R}} \hat{\psi}(w)  dw \int_{\mathbb{S}^{n-1}} e^{iw(\textbf{x}\cdot \textbf{u})}  \mathcal{F}_{-\alpha}{\left( \hat{\Phi}(\textbf{u},w,\cdot) \right)(\textbf{x})} d\textbf{u} . 
\end{align*}
Since $\mathcal{F}_{-\alpha}: \mathcal{S}(\mathbb{R}^n)\to \mathcal{S}(\mathbb{R}^n)$ is an isomorphism, we conclude that for all $v\in \mathbb{N}_0$
$$\rho_{v}\left((DS_{\psi} ^{\alpha})^{*}\Phi \right)=\sup_{\textbf{x}\in \mathbb{R}^n,   |\beta| \leq v } (1+|\textbf{x}|)^{v} \left| \frac{\partial ^{|\beta|}}{\partial \textbf{x}^{\beta}} (DS_{\psi} ^{\alpha})^{*}\Phi (\textbf{x}) \right| \lesssim \rho_{s,r}^{l,m,k}(\Phi) \rho_{q}(\psi),$$
for some $s,r,m,k,q \in \mathbb{N}_0$ and $l\in \mathbb{N}_0 ^n.$
\end{proof}

The next proposition is a direct consequence of Proposition \ref{prop3.5}, Theorem \ref{ctdstfrft}, and Theorem \ref{ctdstfrft1}.

\begin{proposition} \label{prop4.3}
Let $\psi\in \mathcal{S} (\mathbb{R})$ be a non-trivial window and $\eta\in \mathcal{S}(\mathbb{R})$ be a synthesis window for it. Then
\begin{equation} \label{RFormulaDSTFrFT}
\frac{1}{\left( \eta, \psi \right)}\Big((DS_{\eta} ^{\alpha})^{*} \circ DS_\psi ^{\alpha}\Big)= Id_{\mathcal{S} (\mathbb{R}^n)}.
\end{equation} 

\end{proposition}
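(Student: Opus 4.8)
The plan is to combine the pointwise reconstruction formula of Proposition~\ref{prop3.5} with the continuity results just established, so that an identity known to hold for a dense subclass of functions can be promoted to an identity of continuous operators on all of $\mathcal{S}(\mathbb{R}^n)$. Concretely, by Theorem~\ref{ctdstfrft} the map $DS_\psi^{\alpha}$ sends $\mathcal{S}(\mathbb{R}^n)$ continuously into $\mathcal{S}(\mathbb{Y}^{2n})$, and by Theorem~\ref{ctdstfrft1} the synthesis operator $(DS_\eta^{\alpha})^{*}$ sends $\mathcal{S}(\mathbb{Y}^{2n})$ continuously into $\mathcal{S}(\mathbb{R}^n)$; hence the composition $\frac{1}{(\eta,\psi)}\big((DS_\eta^{\alpha})^{*}\circ DS_\psi^{\alpha}\big)$ is a well-defined continuous linear operator $\mathcal{S}(\mathbb{R}^n)\to\mathcal{S}(\mathbb{R}^n)$. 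It therefore suffices to check that it agrees with the identity.

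First I would observe that every $f\in\mathcal{S}(\mathbb{R}^n)$ satisfies the hypothesis of Proposition~\ref{prop3.5}: since $\mathcal{F}_{\alpha}$ maps $\mathcal{S}(\mathbb{R}^n)$ into itself (stated in the Preliminaries), we have $\mathcal{F}_{\alpha}f\in\mathcal{S}(\mathbb{R}^n)\subset L^1(\mathbb{R}^n)$, and of course $f\in L^1(\mathbb{R}^n)$. Then the reconstruction formula of Proposition~\ref{prop3.5}, together with the definition \eqref{synthop} of the synthesis operator applied to $\Phi=DS_\psi^{\alpha}f$ with synthesis window $\eta$, gives
\begin{equation*}
\frac{1}{(\eta,\psi)}\big((DS_\eta^{\alpha})^{*}(DS_\psi^{\alpha}f)\big)(\textbf{w})
=\frac{1}{(\eta,\psi)}\int_{\mathbb{S}^{n-1}}\int_{\mathbb{R}^{n}}\int_{\mathbb{R}} DS_\psi^{\alpha}f(\textbf{u},b,\textbf{a})\,\eta(\textbf{w}\cdot\textbf{u}-b)\,K_{-\alpha}(\textbf{w},\textbf{a})\,db\,d\textbf{a}\,d\textbf{u}=f(\textbf{w})
\end{equation*}
for almost every $\textbf{w}\in\mathbb{R}^n$. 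But $f$ is continuous and, by the two continuity theorems, the left-hand side is a function in $\mathcal{S}(\mathbb{R}^n)$, hence also continuous; two continuous functions that agree almost everywhere agree everywhere, so the identity \eqref{RFormulaDSTFrFT} holds pointwise for every $\textbf{w}$, i.e.\ as an identity of maps on $\mathcal{S}(\mathbb{R}^n)$.

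I expect the only genuinely delicate point to be the bookkeeping around the ``a.e.'' versus ``everywhere'' upgrade and the verification that the composition is literally the operator named in \eqref{RFormulaDSTFrFT} (as opposed to some a.e.-defined object): this is resolved exactly because Theorems~\ref{ctdstfrft} and~\ref{ctdstfrft1} guarantee the output lies in $\mathcal{S}(\mathbb{R}^n)$, so no representative-choosing ambiguity remains. Everything else is a direct substitution of earlier results, so the proof is short; the main content is recognizing that the continuity statements are precisely what licenses reading Proposition~\ref{prop3.5} as an operator identity on the test function space.
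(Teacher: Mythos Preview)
Your proposal is correct and follows exactly the approach indicated in the paper, which simply states that the proposition is a direct consequence of Proposition~\ref{prop3.5}, Theorem~\ref{ctdstfrft}, and Theorem~\ref{ctdstfrft1}. Your write-up merely makes explicit the passage from the almost-everywhere reconstruction to a pointwise operator identity via continuity, which is the natural way to flesh out that one-line justification.
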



\section{The DSTFRFT on $\mathcal{S}'(\mathbb{R}^n)$}\label{se4}

We are ready to define the DSTFRFT and the directional fractional short-time fractional Fourier synthesis operator of tempered distributions.

\begin{definition} \label{def5.1}
Let $\psi\in \mathcal{S} (\mathbb{R})$. We define the DSTFRFT of $f\in \mathcal{S}' (\mathbb{R}^n)$ with respect to $\psi$ as the element $DS_\psi ^{\alpha} f \in \mathcal{S}'(\mathbb{Y}^{2n})$ whose action on test functions is given by 
$$\langle DS_{\psi}^{\alpha} f,\Phi \rangle:= \big\langle\, f, \overline{(DS_\psi ^{\alpha}) ^{*}(\overline{\Phi})}\,\big\rangle, \enspace \Phi\in \mathcal{S}(\mathbb{Y}^{2n}).$$\end{definition}
This definition is valid according to Theorem \ref{ctdstfrft1}.

\begin{remark} \label{rem3}
Using Proposition \ref{prop3.6}, one can show that Definition \ref{def3.1} and Definition \ref{def5.1} coincide for $f\in L^1(\mathbb{R}^n)$. It means that
$$\langle DS_{\psi}^{\alpha} f,\Phi \rangle =\int _{\mathbb{S}^{n-1}} \int_{\mathbb{R}^{n}} \int_{\mathbb{R}} DS_{\psi} ^{\alpha} (\textbf{u},b,\textbf{a}) \Phi(\textbf{u},b,\textbf{a}) dbd\textbf{a}d\textbf{u}, \enspace \Phi\in \mathcal{S}(\mathbb{Y}^{2n}).$$
\end{remark}

\begin{definition} \label{def5.2}
Let $\psi\in \mathcal{S} (\mathbb{R})$. We define the directional fractional short-time fractional Fourier synthesis operator 
$(DS_\psi ^{\alpha}) ^{*}:\mathcal{S}'(\mathbb{Y}^{2n}) \to \mathcal{S}' (\mathbb{R}^n)  $ as
$$\langle (DS_\psi ^{\alpha}) ^{*} F,\varphi \rangle:=\big\langle F,\overline{DS_\psi ^{\alpha} (\overline{\varphi})}\,\big \rangle, \enspace F \in \mathcal{S}'(\mathbb{Y}^{2n}), \enspace \varphi \in \mathcal{S} (\mathbb{R}^n). $$\end{definition}
The validity of Definition \ref{def5.2} is guaranteed by Theorem \ref{ctdstfrft}.

The following continuity result is obtained by taking transposes in Theorems \ref{ctdstfrft} and Theorem \ref{ctdstfrft1}. 

\begin{proposition}
Let $\psi \in \mathcal{S} (\mathbb{R})$. The DSTFRFT $DS_{\psi}^{\alpha}:\mathcal{S}' (\mathbb{R}^n) \to \mathcal{S}'(\mathbb{Y}^{2n}) $ and the directional short-time fractional Fourier synthesis operator $(DS_\psi ^{\alpha}) ^{*}:\mathcal{S}'(\mathbb{Y}^{2n}) \to \mathcal{S}' (\mathbb{R}^n)  $ are continuous linear maps.
\end{proposition}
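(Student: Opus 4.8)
The plan is to obtain both continuity statements as an immediate formal consequence of the definitions (Definition \ref{def5.1} and Definition \ref{def5.2}) together with the two bilinear continuity theorems already proved on the level of test functions. The key observation is that, for a fixed $\psi\in\mathcal S(\mathbb R)$, the distributional maps are literally the transposes (up to the harmless conjugations built into the definitions) of the continuous linear maps $\varphi\mapsto DS_\psi^\alpha\overline\varphi$ on $\mathcal S(\mathbb R^n)$ and $\Phi\mapsto (DS_\psi^\alpha)^*\overline\Phi$ on $\mathcal S(\mathbb Y^{2n})$.

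First I would fix $\psi\in\mathcal S(\mathbb R)$ and record that, by Theorem \ref{ctdstfrft}, the linear map $T_\psi:\mathcal S(\mathbb R^n)\to\mathcal S(\mathbb Y^{2n})$, $T_\psi\varphi:=DS_\psi^\alpha\varphi$, is continuous; likewise, by Theorem \ref{ctdstfrft1}, the linear map $R_\psi:\mathcal S(\mathbb Y^{2n})\to\mathcal S(\mathbb R^n)$, $R_\psi\Phi:=(DS_\psi^\alpha)^*\Phi$, is continuous. (Here I use that the partial conjugation $\varphi\mapsto\overline\varphi$ is a continuous, even topological, antilinear self-map of each Schwartz space, so it may be absorbed without changing continuity.) Then Definition \ref{def5.1} reads $\langle DS_\psi^\alpha f,\Phi\rangle=\langle f,\overline{R_\psi(\overline\Phi)}\rangle$, which exhibits $DS_\psi^\alpha:\mathcal S'(\mathbb R^n)\to\mathcal S'(\mathbb Y^{2n})$ as (a conjugate of) the transpose ${}^tR_\psi$; since $R_\psi$ is continuous and linear between Fréchet spaces, its transpose is continuous for the strong dual topologies (a standard fact, e.g. \cite{Tre67}), hence $DS_\psi^\alpha$ is a continuous linear map on $\mathcal S'$. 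Symmetrically, Definition \ref{def5.2} gives $\langle (DS_\psi^\alpha)^*F,\varphi\rangle=\langle F,\overline{T_\psi(\overline\varphi)}\rangle$, identifying $(DS_\psi^\alpha)^*:\mathcal S'(\mathbb Y^{2n})\to\mathcal S'(\mathbb R^n)$ with the transpose of the continuous map $T_\psi$, whence it too is continuous and linear.

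I would then simply note that linearity in $f$ (resp. in $F$) is clear from the defining pairings, and that continuity for the strong topologies follows because the transpose of a continuous linear map between barrelled (in particular Fréchet, or here also $(DF)$) spaces is continuous from the strong dual to the strong dual; all the spaces involved — $\mathcal S(\mathbb R^n)$, $\mathcal S(\mathbb Y^{2n})$ and their duals — are of this type, and the duality pairings used in Definitions \ref{def5.1} and \ref{def5.2} are exactly the canonical ones, so no compatibility issue arises. This is where essentially all the content sits: once one accepts the general duality principle, the proof is a two-line verification, so the ``main obstacle'' is purely bookkeeping — making sure the conjugations in the definitions are correctly matched so that the objects on the right-hand sides are genuine transposes, and citing the right functional-analytic lemma about continuity of transposed maps. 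No new estimate is needed beyond Theorems \ref{ctdstfrft} and \ref{ctdstfrft1}.
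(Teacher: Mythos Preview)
Your proposal is correct and follows exactly the approach the paper indicates: the paper states that the result ``is obtained by taking transposes in Theorems \ref{ctdstfrft} and Theorem \ref{ctdstfrft1}'' and gives no further argument, so your explicit unpacking of this transpose/duality step (including the harmless conjugations and the appeal to continuity of transposes for the strong topologies) is precisely what is intended.
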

We can generalize the reconstruction formula (\ref{RFormulaDSTFrFT}) for the space of tempered distributions.
\begin{proposition} \label{prop5.4}
Let $\psi\in \mathcal{S} (\mathbb{R})$ be a non-trivial window and $\eta \in \mathcal{S} (\mathbb{R})$ be a synthesis window for it. Then 
$$\frac{1}{\left( \eta, \psi \right)}\Big((DS_\eta ^{\alpha}) ^{*} \circ DS_\psi ^{\alpha}\Big)= Id_{\mathcal{S}'(\mathbb{R}^n)}.$$
\end{proposition}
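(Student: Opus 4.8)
The plan is to deduce the distributional identity from the test-function identity \eqref{RFormulaDSTFrFT} by transposition, using the way the distributional operators were defined in Definitions \ref{def5.1} and \ref{def5.2}. Fix $f\in\mathcal S'(\mathbb R^n)$ and an arbitrary $\varphi\in\mathcal S(\mathbb R^n)$. Unwinding the definitions, $\langle (DS_\eta^\alpha)^*(DS_\psi^\alpha f),\varphi\rangle = \langle DS_\psi^\alpha f, \overline{DS_\eta^\alpha(\overline\varphi)}\rangle = \langle f, \overline{(DS_\psi^\alpha)^*(\overline{\overline{DS_\eta^\alpha(\overline\varphi)}})}\rangle = \langle f, \overline{(DS_\psi^\alpha)^*(DS_\eta^\alpha(\overline\varphi))}\rangle$. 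By Proposition \ref{prop4.3} applied with the roles of $\psi$ and $\eta$ interchanged (note $(\psi,\eta)\neq 0$ as well, so $\eta$ is a non-trivial window and $\psi$ a synthesis window for it), we have $(DS_\psi^\alpha)^*\circ DS_\eta^\alpha = (\eta,\psi)\, Id_{\mathcal S(\mathbb R^n)}$ — here one must check that the constant is indeed $(\eta,\psi)$ and not $(\psi,\eta)$; tracing through the definition of a synthesis window and Proposition \ref{prop3.5} shows the relevant constant is $\left(\psi,\eta\right)$ when the synthesis window is $\psi$, i.e. $(DS_\psi^\alpha)^*\circ DS_\eta^\alpha = \overline{(\eta,\psi)}\,Id$, so care with conjugates is needed. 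Assuming the constant works out, $\langle f, \overline{(DS_\psi^\alpha)^*(DS_\eta^\alpha(\overline\varphi))}\rangle = \langle f, \overline{c\,\overline\varphi}\rangle = \overline c\,\langle f,\varphi\rangle$, and choosing the normalization so that $\overline c/(\eta,\psi)=1$ gives the claim.

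Concretely, the key steps in order: (i) write out $\langle (DS_\eta^\alpha)^*\circ DS_\psi^\alpha f,\varphi\rangle$ using Definition \ref{def5.2} then Definition \ref{def5.1}, obtaining $\langle f,\overline{(DS_\psi^\alpha)^*(DS_\eta^\alpha(\overline\varphi))}\rangle$; (ii) observe that $\overline\varphi\in\mathcal S(\mathbb R^n)$, that $\eta$ is a non-trivial window with synthesis window $\psi$ since $(\psi,\eta)=\overline{(\eta,\psi)}\neq 0$, and invoke Proposition \ref{prop4.3} in the form $(DS_\psi^\alpha)^*\circ DS_\eta^\alpha = \left(\psi,\eta\right) Id_{\mathcal S(\mathbb R^n)}$; (iii) substitute to get $\langle f,\overline{(\psi,\eta)\,\overline\varphi}\rangle = \overline{(\psi,\eta)}\,\langle f,\varphi\rangle = (\eta,\psi)\langle f,\varphi\rangle$; (iv) divide by $(\eta,\psi)$ to conclude $\langle \tfrac{1}{(\eta,\psi)}(DS_\eta^\alpha)^*\circ DS_\psi^\alpha f,\varphi\rangle = \langle f,\varphi\rangle$ for all $\varphi$, hence the operators agree as elements of $\mathcal S'(\mathbb R^n)$.

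The main obstacle I expect is purely bookkeeping rather than conceptual: keeping track of the complex conjugates and of which of $\psi,\eta$ plays the role of analysis versus synthesis window, so that the scalar constant emerging from Proposition \ref{prop4.3} is exactly $(\eta,\psi)$ (matching the $\tfrac{1}{(\eta,\psi)}$ prefactor) and not its conjugate or $(\psi,\eta)$. The definitions \ref{def5.1} and \ref{def5.2} each insert a pair of bars, and $DS^\alpha$ is sesquilinear in $\psi$, so a careless step flips a conjugate; the safe route is to verify the scalar by pairing against a single explicit test function, or by noting $\overline{(\eta,\psi)}=(\psi,\eta)$ and reconciling with \eqref{RFormulaDSTFrFT}. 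Once the constant is pinned down, the argument is a two-line transposition and needs no further analytic input beyond the continuity already established in Section \ref{se3}.
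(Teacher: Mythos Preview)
Your argument is correct and matches the paper's proof essentially line for line: the paper too unwinds Definitions \ref{def5.2} and \ref{def5.1} to obtain $\langle f,\overline{(DS_\psi^\alpha)^*(DS_\eta^\alpha(\overline\varphi))}\rangle$, applies Proposition \ref{prop4.3} with the roles of $\psi$ and $\eta$ swapped to get the constant $(\psi,\eta)$, and then uses $\overline{(\psi,\eta)}=(\eta,\psi)$. Your extended discussion of the conjugate bookkeeping is accurate and resolves to exactly the constant the paper writes down.
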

\begin{proof}
Let $f\in \mathcal{S}'(\mathbb{R}^n)$ and $\varphi \in \mathcal{S}(\mathbb{R}^n)$. Using Proposition \ref{prop4.3}, we obtain 
\begin{align*}
    \langle (DS_\eta ^{\alpha}) ^{*} \circ DS_\psi ^{\alpha}\ f,\varphi \rangle&=\big\langle DS_\psi ^{\alpha} f,\overline{DS_{\eta} ^{\alpha} (\overline{\varphi})} \,\big\rangle=\big\langle f,\overline{(DS_\psi ^{\alpha}) ^{*} \circ (DS_{\eta} ^{\alpha}(\overline{\varphi}))} \,\big\rangle\\&=\langle f, \overline{\left( \psi, \eta \right)} \varphi\rangle=\left( \eta, \psi \right)\langle f,\varphi \rangle.
\end{align*}
\end{proof}


\section{Desingularization formula}\label{se5}

Let $f\in \mathcal{S} (\mathbb{R}^{n})$ and $\psi\in \mathcal{S}(\mathbb{R}) $. By Remark \ref{rem3}, for $\Phi \in \mathcal{S}(\mathbb{Y}^{2n})$, we obtain
\begin{align*}
\langle DS_{\psi}^{\alpha} f,\Phi \rangle &=\int _{\mathbb{S}^{n-1}} \int_{\mathbb{R}^{n}} \int_{\mathbb{R}} DS_{\psi} ^{\alpha} f(\textbf{u},b,\textbf{a}) \Phi(\textbf{u},b,\textbf{a}) dbd\textbf{a}d\textbf{u}\\
&=\int _{\mathbb{S}^{n-1}} \int_{\mathbb{R}^{n}} \int_{\mathbb{R}} \mathcal{F}_{\alpha} \left( f(\cdot)\overline{\psi}((\cdot)\cdot \textbf{u}-b) \right)(\textbf{a}) \Phi(\textbf{u},b,\textbf{a}) dbd\textbf{a}d\textbf{u}\\
&=\int_{\mathbb{\mathbb{S}}^{n-1}} \int_{\mathbb{R}}  \langle \mathcal{F}_{\alpha} \left( f(\cdot)\overline{\psi}((\cdot)\cdot \textbf{u}-b) \right)(\textbf{a}), \Phi(\textbf{u},b,\textbf{a}) \rangle_\textbf{a} dbd\textbf{u}.
\end{align*}

In the case when $f\in \mathcal{S}' (\mathbb{R}^{n}) $ 
 and $\psi\in \mathcal{S}(\mathbb{R})$, one can show that $f(\cdot)\overline{\psi}((\cdot)\cdot \textbf{u}-b) \in \mathcal{S}' (\mathbb{R}^{n})$, for $(\textbf{u},b) \in \mathbb{S}^{n-1} \times \mathbb{R}$.  Consequently, $\mathcal{F}_{\alpha} \left( f(\cdot)\overline{\psi}((\cdot)\cdot \textbf{u}-b) \right)\in \mathcal{S}' (\mathbb{R}^{n})$, which motivated us for the next result that gives a relation between the DSTFRFT and the FRFT.

\begin{proposition} \label{prop6.1}
Let $f\in \mathcal{S} ' (\mathbb{R}^{n}) $ and $\psi\in \mathcal{S}(\mathbb{R}) $. Then
\begin{equation} \label{8}
\langle DS_\psi ^{\alpha} f, \Phi \rangle=\int_{\mathbb{\mathbb{S}}^{n-1}} \int_{\mathbb{R}}  \langle \mathcal{F}_{\alpha} \left( f(\cdot)\overline{\psi}((\cdot)\cdot \textbf{u}-b) \right)(\textbf{a}), \Phi(\textbf{u},b,\textbf{a}) \rangle_\textbf{a} dbd\textbf{u}, \enspace \Phi \in \mathcal{S}(\mathbb{Y}^{2n}).
\end{equation}
Furthermore, $DS_\psi ^{\alpha} f \in \mathbb{C}^{\infty}(\mathbb{S}^{n-1} \times \mathbb{R}, \mathcal{S}'(\mathbb{R}^{n}))$ and it is of slow growth on $\mathbb{S}^{n-1} \times \mathbb{R}$.
\end{proposition}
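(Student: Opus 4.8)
The plan is to unwind the definition $\langle DS_{\psi}^{\alpha} f,\Phi \rangle = \langle f, \overline{(DS_\psi ^{\alpha}) ^{*}(\overline{\Phi})}\,\rangle$, to rewrite the test function $\overline{(DS_\psi ^{\alpha}) ^{*}(\overline{\Phi})}$ as an $\mathcal{S}(\mathbb{R}^n)$-valued integral over $\mathbb{S}^{n-1}\times\mathbb{R}$, and then to push the continuous functional $f$ inside that integral. The one preparatory identity needed is $\overline{\mathcal{F}_{-\alpha}\overline{g}}=\mathcal{F}_{\alpha}g$ for $g\in\mathcal{S}(\mathbb{R}^n)$, which follows at once from $\overline{K_{-\alpha}(\textbf{x},\textbf{a})}=K_{\alpha}(\textbf{x},\textbf{a})$ together with the symmetry $K_{\alpha}(\textbf{x},\textbf{a})=K_{\alpha}(\textbf{a},\textbf{x})$; here one uses $\cot(-\alpha_k)=-\cot\alpha_k$, $\csc(-\alpha_k)=-\csc\alpha_k$, $C_{-\alpha_k}=\overline{C_{\alpha_k}}$, and that only the non-degenerate branch of the kernel occurs since $\alpha_k\in(-\pi,\pi)\setminus\{0\}$.

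Applying Fubini's theorem in \eqref{synthop} and conjugating, one obtains, for every $\Phi\in\mathcal{S}(\mathbb{Y}^{2n})$,
\[
\overline{(DS_\psi ^{\alpha}) ^{*}(\overline{\Phi})}=\int_{\mathbb{S}^{n-1}}\int_{\mathbb{R}}\overline{\psi}((\cdot)\cdot\textbf{u}-b)\,\mathcal{F}_{\alpha}\!\left[\Phi(\textbf{u},b,\cdot)\right]db\,d\textbf{u}.
\]
The integrand $(\textbf{u},b)\mapsto \overline{\psi}((\cdot)\cdot\textbf{u}-b)\,\mathcal{F}_{\alpha}[\Phi(\textbf{u},b,\cdot)]$ is a continuous $\mathcal{S}(\mathbb{R}^n)$-valued map, and for each $m\in\mathbb{N}_0$ its seminorm is dominated by $C_{m}(1+|b|)^{-2}$ uniformly in $\textbf{u}$: by Leibniz's formula and $|\textbf{u}^{\delta}|\le 1$ one has $\rho_m(\overline{\psi}((\cdot)\cdot\textbf{u}-b)\chi)\lesssim \rho_m(\chi)$ with constant independent of $(\textbf{u},b)$, $\mathcal{F}_{\alpha}$ acts continuously on $\mathcal{S}(\mathbb{R}^n)$, and $\Phi$ decays rapidly in $b$. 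Hence the displayed integral converges as a Bochner integral in $\mathcal{S}(\mathbb{R}^n)$, and since $f$ is a continuous linear functional it commutes with it. Combining this with the chain of identities $\langle f,\overline{\psi}((\cdot)\cdot\textbf{u}-b)\mathcal{F}_{\alpha}[\Phi(\textbf{u},b,\cdot)]\rangle=\langle f\,\overline{\psi}((\cdot)\cdot\textbf{u}-b),\mathcal{F}_{\alpha}[\Phi(\textbf{u},b,\cdot)]\rangle=\langle\mathcal{F}_{\alpha}(f\,\overline{\psi}((\cdot)\cdot\textbf{u}-b)),\Phi(\textbf{u},b,\cdot)\rangle$ (definition of multiplication of a distribution by a Schwartz function and of the distributional FRFT) yields exactly \eqref{8}.

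For the last statement, put $T_{\textbf{u},b}:=\mathcal{F}_{\alpha}\big(f\,\overline{\psi}((\cdot)\cdot\textbf{u}-b)\big)\in\mathcal{S}'(\mathbb{R}^n)$. A Taylor-expansion argument, using the same uniform Leibniz estimate, shows that for each $\chi\in\mathcal{S}(\mathbb{R}^n)$ the map $(\textbf{u},b)\mapsto\overline{\psi}((\cdot)\cdot\textbf{u}-b)\chi$ is $C^{\infty}$ from $\mathbb{S}^{n-1}\times\mathbb{R}$ into $\mathcal{S}(\mathbb{R}^n)$, its $b$- and tangential $\textbf{u}$-derivatives again being of the form $P(\textbf{x},\textbf{u})\,\overline{\psi^{(j)}}((\cdot)\cdot\textbf{u}-b)\chi$. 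Composing with the continuous linear maps $g\mapsto fg$ and $\mathcal{F}_{\alpha}$, one gets that $(\textbf{u},b)\mapsto\langle T_{\textbf{u},b},\varphi\rangle=\langle f,\overline{\psi}((\cdot)\cdot\textbf{u}-b)\mathcal{F}_{\alpha}\varphi\rangle$ is $C^{\infty}$ for every $\varphi\in\mathcal{S}(\mathbb{R}^n)$, hence $(\textbf{u},b)\mapsto T_{\textbf{u},b}$ is a smooth $\mathcal{S}'(\mathbb{R}^n)$-valued map, which by \eqref{8} is $DS_\psi^{\alpha}f$. Slow growth — in fact boundedness — follows from the estimate $|\langle T_{\textbf{u},b},\varphi\rangle|\le C\,\rho_N(\overline{\psi}((\cdot)\cdot\textbf{u}-b)\mathcal{F}_{\alpha}\varphi)\le C'\rho_M(\varphi)$ with $C'$ and $M$ independent of $(\textbf{u},b)$, which at the same time confirms that the right-hand side of \eqref{8} is the action of a well-defined element of $\mathcal{S}'(\mathbb{Y}^{2n})$.

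The step requiring the most care is the justification of moving $f$ past the integral: one must verify that $(\textbf{u},b)\mapsto\overline{\psi}((\cdot)\cdot\textbf{u}-b)\mathcal{F}_{\alpha}[\Phi(\textbf{u},b,\cdot)]$ is genuinely a continuous, absolutely Bochner-integrable $\mathcal{S}(\mathbb{R}^n)$-valued function — the decay in $b$ being the substantive input — and, for the smoothness assertion, to check that differentiation along the sphere produces $\mathcal{S}(\mathbb{R}^n)$-valued derivatives. Both reductions rest on the Leibniz bound being uniform in $(\textbf{u},b)$, which is available precisely because $|\textbf{u}|=1$.
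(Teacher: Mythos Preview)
Your argument is correct and follows the same overall outline as the paper's proof: both rewrite $\overline{(DS_\psi^{\alpha})^{*}(\overline{\Phi})}(\textbf{x})$ as $\int_{\mathbb{S}^{n-1}}\int_{\mathbb{R}}\overline{\psi}(\textbf{x}\cdot\textbf{u}-b)\,\mathcal{F}_{\alpha}[\Phi(\textbf{u},b,\cdot)](\textbf{x})\,db\,d\textbf{u}$ and then move $f$ past the integral. The genuine difference lies in how that interchange is justified. The paper invokes the structure theorem for tempered distributions, writing $f=\partial^{\beta}h$ with $h$ continuous of polynomial growth, so that the pairing becomes an honest Lebesgue integral and classical Fubini applies. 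You instead show that the integrand is a continuous, absolutely Bochner-integrable $\mathcal{S}(\mathbb{R}^n)$-valued map (via the uniform Leibniz bound and rapid decay of $\Phi$ in $b$), and use that a continuous linear functional commutes with a Bochner integral. Your route is self-contained and avoids the structure theorem, at the price of checking Bochner integrability in a Fr\'echet space; the paper's route is more elementary once one accepts the representation $f=\partial^{\beta}h$. For the smoothness and slow-growth statement the paper is terser (essentially asserting the result and invoking the kernel-theorem identification \eqref{SI}), whereas you supply more detail through the Leibniz/Taylor estimates; both are adequate.
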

 \begin{proof}
Using Definition \ref{def5.1} and the Fubini's theorem, we obtain
\begin{align*}
\langle DS_{\psi}^{\alpha} f,\Phi \rangle &= \big\langle f(\textbf x), \overline{(DS_{\psi} ^{\alpha})^{*}(\overline{\Phi})}(\textbf x)\,\big\rangle \\
&=\big\langle f(\textbf x), \int_{\mathbb{S}^{n-1}}\int_{\mathbb{R}^{n}} \int_{\mathbb{R}} \Phi(\textbf{u},b,\textbf{a}) \overline{\psi}(\textbf{x} \cdot \textbf{u}-b) K_{\alpha}(\textbf{x},\textbf{a}) dbd\textbf{a}d\textbf{u}\big\rangle
 \end{align*}
\begin{align*}
&=\langle f(\textbf{x}), \int_{\mathbb{S}^{n-1}}\int_{\mathbb{R}}  \overline{\psi}(\textbf{x} \cdot \textbf{u}-b) \mathcal{F}_{\alpha} \left( \Phi (\textbf{u},b,\cdot)\right)(\textbf{x}) dbd\textbf{u}\rangle.
\end{align*}

Since $f\in \mathcal{S} ' (\mathbb{R}^{n})$, it follows that $f=\partial ^{\beta}h$, for some continuous function $h$ of at most polynomial growth on $\mathbb{R}^{n}$, and some $\beta \in \mathbb{N}_{0}^{n}$ (\cite{Sch66}, Thm. VI, page 239).
Then, using the Fubini's theorem, we obtain 
\begin{align*}
 \quad \langle DS_{\psi}^{\alpha} f,\Phi \rangle&=\langle f(\textbf{x}), \int_{\mathbb{S}^{n-1}}\int_{\mathbb{R}}  \overline{\psi}(\textbf{x} \cdot \textbf{u}-b) \mathcal{F}_{\alpha} \left( \Phi (\textbf{u},b,\cdot)\right)(\textbf{x}) dbd\textbf{u}\rangle\\
&=(-1)^{|\beta|} \int_{\mathbb{R}^n}{h(\textbf{x}) \frac{\partial^{|\beta|}}{\partial \textbf{x}^{\beta}} \left( \int_{\mathbb{S}^{n-1}}\int_{\mathbb{R}}  \overline{\psi}(\textbf{x} \cdot \textbf{u}-b) \mathcal{F}_{\alpha} \left( \Phi (\textbf{u},b,\cdot)\right)(\textbf{x}) dbd\textbf{u} \right)}d\textbf{x}\\
&=(-1)^{|\beta|} \int_{\mathbb{S}^{n-1}}\int_{\mathbb{R}} dbd\textbf{u} \int_{\mathbb{R}^n}{h(\textbf{x}) \frac{\partial^{|\beta|}}{\partial \textbf{x}^{\beta}} \left( \overline{\psi}(\textbf{x} \cdot \textbf{u}-b) \mathcal{F}_{\alpha} \left( \Phi (\textbf{u},b,\cdot) \right)(\textbf{x}) \right)} d\textbf{x}\\
&=\int_{\mathbb{S}^{n-1}}\int_{\mathbb{R}} \langle f(\textbf{x}),\overline{\psi}(\textbf{x} \cdot \textbf{u}-b) \mathcal{F}_{\alpha} \left( \Phi (\textbf{u},b,\cdot) \right)(\textbf{x}) \rangle _{\textbf{x}} dbd\textbf{u}\\
&=\int_{\mathbb{\mathbb{S}}^{n-1}} \int_{\mathbb{R}}  \langle \mathcal{F}_{\alpha} \left( f(\cdot)\overline{\psi}((\cdot)\cdot \textbf{u}-b) \right)(\textbf{a}), \Phi(\textbf{u},b,\textbf{a}) \rangle_\textbf{a} dbd\textbf{u},
\end{align*}
which proves the relation (\ref{8}). One can show that for fixed $\varphi \in \mathcal{S}(\mathbb{R}^n)$, the function
$$(\textbf{u},b)\to \langle \mathcal{F}_{\alpha} \left( f(\cdot)\overline{\psi}((\cdot)\cdot \textbf{u}-b) \right)(\textbf{a}), \varphi(\textbf{a}) \rangle_\textbf{a}, \quad (\textbf{u},b) \in \mathbb{S}^{n-1} \times \mathbb{R},$$
is smooth of at most polynomial growth on $\mathbb{S}^{n-1} \times \mathbb{R}$.

For $\varphi \in \mathbb{S}(\mathbb{R}^n)$ and $\Psi \in \mathbb{S}(\mathbb{S}^{n-1} \times \mathbb{R})$, under the standard identification (\ref{SI}), we obtain
\begin{align*}
&\, \quad \langle \langle DS_{\psi}^{\alpha}f(\textbf{u},b,\textbf{a}), \varphi(\textbf{a}) \rangle_{\textbf{a}}, \Psi(\textbf{u},b)\rangle_{\textbf{u},b}= \langle DS_{\psi}^{\alpha}f, \varphi \Psi \rangle\\
&=\int_{\mathbb{\mathbb{S}}^{n-1}} \int_{\mathbb{R}}  \langle \mathcal{F}_{\alpha} \left( f(\cdot)\overline{\psi}((\cdot)\cdot \textbf{u}-b) \right)(\textbf{a}), \varphi(\textbf{a}) \rangle_\textbf{a} \Psi(\textbf{u},b) dbd\textbf{u}.
\end{align*}
Using the standard identification, we obtain
\begin{align*}
    \langle DS_{\psi}^{\alpha}f(\textbf{u},b,\textbf{a}), \varphi(\textbf{a}) \rangle_{\textbf{a}}&=\langle\mathcal{F}_{\alpha} \left( f(\cdot)\overline{\psi}((\cdot)\cdot \textbf{u}-b) \right)(\textbf{a}), \varphi(\textbf{a}) \rangle_\textbf{a}\\
    &=\langle f(\textbf{x}),\overline{\psi}(\textbf{x} \cdot \textbf{u}-b) \mathcal{F}_{\alpha} \varphi (\textbf{x}) \rangle_\textbf{x}.
\end{align*}
Then $DS_\psi ^{\alpha} f \in \mathbb{C}^{\infty}(\mathbb{S}^{n-1} \times \mathbb{R}, \mathcal{S}'(\mathbb{R}^{n}))$ and it is of slow growth on $\mathbb{S}^{n-1} \times \mathbb{R}$.
 \end{proof}

As a corollary of Proposition \ref{prop6.1}, we end this article with the following desingularization formula.
 
 \begin{corollary}\textnormal{(Desingularization formula)} \label{desingform}
 Let $f\in \mathcal{S}' (\mathbb{R}^{n}) $ and $\psi\in \mathcal{S} (\mathbb{R}) $ be a non-trivial window. If $\eta \in \mathcal{S} (\mathbb{R}) $ is a synthesis window for $\psi$, then 
\begin{equation}
\langle f, \varphi \rangle= \frac{1}{\left( \eta, \psi \right)} \int_{\mathbb{\mathbb{S}}^{n-1}} \int_{\mathbb{R}}  \langle \mathcal{F}_{\alpha} \left( f(\cdot)\overline{\psi}((\cdot)\cdot \textbf{u}-b) \right)(\textbf{a}), \overline{DS_{\eta}^{\alpha}(\overline{\varphi})}(\textbf{u},b,\textbf{a}) \rangle_\textbf{a} dbd\textbf{u} ,  \enspace \varphi \in \mathcal{S} (\mathbb{R}^{n}).
\end{equation}
 \end{corollary}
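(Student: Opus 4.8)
\textbf{Proof plan for the Desingularization formula (Corollary \ref{desingform}).}
The plan is to combine the distributional reconstruction formula from Proposition \ref{prop5.4} with the integral representation of the DSTFRFT obtained in Proposition \ref{prop6.1}. First I would start from Proposition \ref{prop5.4}, which gives
$$\langle f,\varphi\rangle=\frac{1}{\left(\eta,\psi\right)}\big\langle (DS_\eta^{\alpha})^{*}\circ DS_\psi^{\alpha}f,\varphi\big\rangle,\qquad \varphi\in\mathcal{S}(\mathbb{R}^n).$$
By Definition \ref{def5.2}, the right-hand side equals $\frac{1}{\left(\eta,\psi\right)}\big\langle DS_\psi^{\alpha}f,\overline{DS_\eta^{\alpha}(\overline{\varphi})}\,\big\rangle$, so the quantity we must unpack is the dual pairing of $DS_\psi^{\alpha}f\in\mathcal{S}'(\mathbb{Y}^{2n})$ against the test function $\Phi:=\overline{DS_\eta^{\alpha}(\overline{\varphi})}\in\mathcal{S}(\mathbb{Y}^{2n})$; the fact that this $\Phi$ genuinely lies in $\mathcal{S}(\mathbb{Y}^{2n})$ is exactly Theorem \ref{ctdstfrft}.

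The next step is to apply the representation formula (\ref{8}) of Proposition \ref{prop6.1} with this particular choice of $\Phi$. Substituting $\Phi(\textbf{u},b,\textbf{a})=\overline{DS_\eta^{\alpha}(\overline{\varphi})}(\textbf{u},b,\textbf{a})$ into (\ref{8}) immediately yields
$$\big\langle DS_\psi^{\alpha}f,\overline{DS_\eta^{\alpha}(\overline{\varphi})}\,\big\rangle=\int_{\mathbb{S}^{n-1}}\int_{\mathbb{R}}\big\langle \mathcal{F}_{\alpha}\left(f(\cdot)\overline{\psi}((\cdot)\cdot\textbf{u}-b)\right)(\textbf{a}),\overline{DS_\eta^{\alpha}(\overline{\varphi})}(\textbf{u},b,\textbf{a})\big\rangle_{\textbf{a}}\,dbd\textbf{u},$$
and dividing by $\left(\eta,\psi\right)$ gives precisely the asserted identity. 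So structurally the proof is a two-line chain: Proposition \ref{prop5.4} $\to$ Definition \ref{def5.2} $\to$ Proposition \ref{prop6.1}, with the well-definedness of the inner pairing guaranteed by Theorems \ref{ctdstfrft} and \ref{ctdstfrft1} (the latter ensuring $(DS_\eta^{\alpha})^{*}$ makes sense on test functions, the former that $DS_\eta^{\alpha}(\overline{\varphi})$ is a legitimate element of $\mathcal{S}(\mathbb{Y}^{2n})$ so that the $\textbf{a}$-pairing against the tempered distribution $\mathcal{F}_{\alpha}(f(\cdot)\overline{\psi}((\cdot)\cdot\textbf{u}-b))$ is valid).

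There is essentially no hard analytic obstacle here, since all the heavy lifting — the Fubini manipulations, the slow-growth bounds on $(\textbf{u},b)\mapsto\langle\mathcal{F}_{\alpha}(f(\cdot)\overline{\psi}((\cdot)\cdot\textbf{u}-b))(\textbf{a}),\varphi(\textbf{a})\rangle_{\textbf{a}}$, and the continuity of the synthesis operator — has already been carried out in Propositions \ref{prop5.4}, \ref{prop6.1} and in Theorems \ref{ctdstfrft}, \ref{ctdstfrft1}. The one point that deserves a word of care is making sure that the iterated integral on the right-hand side of (\ref{8}) is literally applicable to $\Phi=\overline{DS_\eta^{\alpha}(\overline{\varphi})}$ rather than to a generic element of $\mathcal{S}(\mathbb{Y}^{2n})$; this is immediate once one invokes Theorem \ref{ctdstfrft} to place $DS_\eta^{\alpha}(\overline{\varphi})$ in $\mathcal{S}(\mathbb{Y}^{2n})$, after which (\ref{8}) holds verbatim. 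Thus the corollary follows by direct substitution, and I would present it in exactly that order.
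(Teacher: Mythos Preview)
Your proposal is correct and follows essentially the same chain as the paper's proof: invoke Proposition~\ref{prop5.4}, unwind via Definition~\ref{def5.2}, and then substitute $\Phi=\overline{DS_\eta^{\alpha}(\overline{\varphi})}$ into the representation (\ref{8}) of Proposition~\ref{prop6.1}. Your added remark that Theorem~\ref{ctdstfrft} guarantees $\Phi\in\mathcal{S}(\mathbb{Y}^{2n})$ is a welcome clarification but not a departure from the paper's argument.
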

 \begin{proof}
 By Definition \ref{def5.2}, Proposition \ref{prop5.4}, and Proposition \ref{prop6.1}, we have
 \begin{align*}
 \langle f, \varphi \rangle&= \frac{1}{\left( \eta, \psi \right)} \langle (DS_{\eta} ^{\alpha})^{*}(DS_\psi ^{\alpha} f), \varphi \rangle = \frac{1}{\left( \eta, \psi \right)} \big\langle DS_\psi ^{\alpha} f, \overline{DS_{\eta} ^{\alpha}(\overline{\varphi })}\,\big\rangle\\
 &=\frac{1}{\left( \eta, \psi \right)} \int_{\mathbb{\mathbb{S}}^{n-1}} \int_{\mathbb{R}} \big \langle \mathcal{F}_{\alpha} \left( f(\cdot)\overline{\psi}((\cdot)\cdot \textbf{u}-b) \right)(\textbf{a}), \overline{DS_{\eta}^{\alpha}(\overline{\varphi})}(\textbf{u},b,\textbf{a}) \,\big\rangle_\textbf{a} dbd\textbf{u}.
 \end{align*}
\end{proof}

\section*{Disclosure statement}

The authors report there are no competing interests to declare.


\begin{thebibliography}{99}
\bibitem{Gro01} 

Gröchenig, K.: Foundation of time-frequency analysis. Birkh\"{a}user, Boston (2001).

\bibitem{But}
Bultheel, A., Sulbaran~H.M.: An introduction to the fractional Fourier transform and friends. Cubo. Mat. Educ. 7, 201–221 (2005).


\bibitem{Kober}Kober, H.: Wurzeln aus der Hankel- und Fourier und anderen stetigen Transformationen. Quart. J. Math. Oxford Ser. 10, 45–49 (1939). https://doi.org/10.1093/qmath/os-10.1.45

\bibitem{Nam80} 


Namias, V.: The fractional order Fourier transform and its application to quantum mechanics. J. Inst. Maths. Appl. 25, 241-265 (1980). https://doi.org/10.1093/imamat/25.3.241

\bibitem{Alm94} 

Almeida, L.B.: The fractional Fourier transform and time-frequency representations. IEEE Trans. Signal Process. 42(11), 3084-3091 (1994). DOI: 10.1109/78.330368

\bibitem{Zay07} 

Zayed, A.I.: Fractional Fourier transform of generalized functions. Integral transforms spec. funct. 7(3-4), 229-312 (1998). https://doi.org/10.1080/10652469808819206

\bibitem{Path12} 
Pathak, R.S., Prasad, A., Kumar, M.: Fractional Fourier transform of temepered distributions and generalized pseudo-differential operator. J. Pseudo-Differ. Oper. Appl. 3, 239-254 (2012). https://doi.org/10.1007/s11868-012-0047-8

\bibitem{Oza01} 

Ozaktas, H., Zalevsky, Z., Kutay, M.: The fractional Fourier transform: with application in optics and signal processing. J. Wiley, New York (2001).

\bibitem{Kam19} 

Kamalakkannan, R., Roopkumar, R.: Multidimensional fractional Fourier transform and generalized fractional convolution. Integral transforms spec. funct. 31(2), 152-165 (2019). https://doi.org/10.1080/10652469.2019.1684486

\bibitem{Tof23} 

Toft, J., Bhimani, D.G., Manna, R.: Fractional Fourier transforms, harmonic oscillator propagators and Strichartz estimates on Pilipovi\'{c} and modulation spaces. Appl. Comput. Harmon. Anal. 67, 101580 (2023). https://doi.org/10.1016/j.acha.2023.101580

\bibitem{Zay18} 
Zayed, A.: Two-dimensional fractional Fourier transform and some of its properties. Integral transforms spec. funct. 29(7), 553-570 (2018). https://doi.org/10.1080/10652469.2018.1471689

\bibitem{Tao10} 
Tao, R., Li, Y.L., Wang, Y.: Short-time fractional Fourier transform and its applications.  IEEE Trans. Signal Process. 58(5), 2568-2580 (2010). DOI: 10.1109/TSP.2009.2028095


\bibitem{Gao21} 
Gao, W., Li, B.: Convolution theorem involving n-dimensional windowed fractional Fourier transform. Sci. China Inf. Sci. 64, 169302 (2021). https://doi.org/10.1007/s11432-020-2909-5

\bibitem{Ata23} 
Atanasova, S., Maksimovi\'{c}, S., Pilipovi\'{c}, S.: Abelian and Tauberian results for the fractional Fourier and short-time Fourier transforms of distributions. Integral transforms spec. funct. (2023). https://doi.org/10.1080/10652469.2023.2255367


\bibitem{Can99} 
Cand\`{e}s, E.J.: Harmonic Analysis of Neural Networks.  Appl. Comput. Harmon. Anal. 6(2), 197-218 (1999). https://doi.org/10.1006/acha.1998.0248


\bibitem{Gra08} 

Grafakos, L., Sansing, C.: Gabor frames and directional time-frequency analysis. Appl. Comput. Harmon. Anal. 25(1), 47-67 (2008). https://doi.org/10.1016/j.acha.2007.09.004

\bibitem{Giv13} 
Giv, H.H.: Directional short-time Fourier transform. J. Math. Anal. Appli. 399(1), 100-107 (2013). https://doi.org/10.1016/j.jmaa.2012.09.053

\bibitem{San16} 

Saneva, K., Atanasova, S.: Directional short-time Fourier transform of distributions. J. Inequal. Appl. 124, 1-10 (2016). https://doi.org/10.1186/s13660-016-1065-5

\bibitem{Hor83} 
H\"{o}rmander, L.: The analysis of linear partial differential operators I: Distribution theory and Fourier Analysis. Springer, Berlin (1983).

\bibitem{Sch66} 
Schwartz, L.: Th\'{e}orie des Distributions. Hermann, Paris (1966).

\bibitem{Tre67} 
Treves, F.: Topological vector spaces, distributions and kernels. Academic press, New York-London (1967).


\bibitem{Her83} 

Hertle, A.: Continuity of the Radon Transform and its Inverse on Euclidean Space. Math. Z. 184, 165-192 (1983). https://doi.org/10.1007/BF01252856


\bibitem{Hel99} 

Helgason, S.: The Radon transform. 2nd ed. Birkh\"{a}user Boston, Inc., Boston(MA) (1999).

\bibitem{APS} Atanasova, S., Pilipovi\'{c}, S.,  Saneva, K.: Directional Time-Frequency Analysis and Directional Regularity. Bull. Malays. Math. Sci. Soc. 42, 2075-2090 (2019). https://doi.org/10.1007/s40840-017-0594-5





\end{thebibliography}
\end{document}